\documentclass[11pt, reqno]{amsart}
\pdfoutput=1

\usepackage{amssymb}
\usepackage{mathtools}
\usepackage{mathrsfs}
\usepackage{stmaryrd}
\usepackage{upgreek}
\usepackage{centernot}
\usepackage{cancel}

\usepackage{bm}

\usepackage[T2A,T1]{fontenc}
\usepackage[utf8]{inputenc}
\usepackage[russian, french, english]{babel}
\usepackage{lmodern}
\usepackage[spacing=true,kerning=true,babel=true,nopatch=footnote]{microtype}
\usepackage[shortcuts]{extdash}
\usepackage[inline]{enumitem}
\usepackage{indentfirst}
\usepackage{xspace}
\usepackage{xcolor}
\usepackage{csquotes}

\usepackage[a4paper, left=2cm, right=2cm, top=1in, bottom=1in]{geometry}
\usepackage{graphicx}
\usepackage{wrapfig}
\usepackage{float}
\usepackage{array}
\usepackage{multicol}
\usepackage{mdframed}
\usepackage[skins]{tcolorbox}
\usepackage{framed}
\usepackage[justification=centering, labelfont=bf, font=small]{caption}
\usepackage{algorithm}

\usepackage{tikz}
\usetikzlibrary{shapes}
\usetikzlibrary{arrows.meta}
\usetikzlibrary{decorations.pathmorphing}
\usetikzlibrary{patterns}

\usepackage{titlesec}
\usepackage{titletoc}

\usepackage[
    sortcites,
    backend=biber,
    style=alphabetic,
    sorting=nyt,
    maxnames=100,
    backref=false
]{biblatex}
\usepackage[hidelinks]{hyperref}
\usepackage{cleveref}

\newcommand{\bemph}[1]{{\normalfont#1}}
\newcommand{\ep}[1]{\bemph{(}#1\bemph{)}}

\newtheoremstyle{bfnote}{}{}{\slshape}{}{\bfseries}{\bfseries.}{ }{\thmname{#1}\thmnumber{ #2}\thmnote{ \ep{\normalfont{}#3}}}
\theoremstyle{bfnote}
\newtheorem{theo}{Theorem}[section]
\newtheorem*{theo*}{Theorem}

\newtheorem{lemma}[theo]{Lemma}
\newtheorem{claim}[theo]{Claim}

\newtheorem{fact}[theo]{Fact}

\newtheorem*{corl*}{Corollary}

\theoremstyle{definition}
\newtheorem{defn}[theo]{Definition}
\newtheorem*{defn*}{Definition}

\newtheorem*{exmp*}{Example}

\theoremstyle{remark}
\newtheorem*{ques*}{Question}
\newtheorem*{remk*}{Remark}

\crefname{theo}{theorem}{theorems}
\Crefname{theo}{Theorem}{Theorems}
\crefname{lemma}{lemma}{lemmas}
\Crefname{lemma}{Lemma}{Lemmas}
\crefname{claim}{claim}{claims}
\Crefname{claim}{Claim}{Claims}
\crefname{prop}{proposition}{propositions}
\Crefname{prop}{Proposition}{Propositions}
\crefname{corl}{corollary}{corollaries}
\Crefname{corl}{Corollary}{Corollaries}
\crefname{defn}{definition}{definitions}
\Crefname{defn}{Definition}{Definitions}
\crefname{remk}{remark}{remarks}
\Crefname{remk}{Remark}{Remarks}
\crefname{alg}{algorithm}{algorithms}
\Crefname{alg}{Algorithm}{Algorithms}

\newcommand*{\myproofname}{Proof}

\makeatletter
\newcommand{\neutralize}[1]{\expandafter\let\csname c@#1\endcsname\count@}
\newcommand{\authorfootnote}[1]{%
  \begingroup
  \renewcommand{\@makefnmark}{}%
  \renewcommand{\@makefntext}[1]{\noindent ##1}%
  \footnotetext{#1}%
  \endgroup
}
\makeatother

\newcommand{\1}{\mathbf 1}

\renewcommand{\P}{\mathbb{P}}
\newcommand{\E}{\mathbb{E}}
\renewcommand{\epsilon}{\varepsilon}
\newcommand{\eps}{\epsilon}
\renewcommand{\phi}{\varphi}
\renewcommand{\leq}{\leqslant}
\renewcommand{\geq}{\geqslant}
\newcommand{\defeq}{\coloneqq}

\newcommand{\emphd}[1]{{\fontseries{b}\selectfont\textsf{#1}}}

\DeclareMathOperator{\Safe}{Safe}
\DeclareMathOperator{\Bad}{Bad}

\numberwithin{equation}{section}

\titleformat{\section}[block]{\large\bfseries\sffamily}{\thesection.}{1ex}{}
\titleformat{\subsection}[block]{\bfseries\sffamily}{\thesubsection.}{1ex}{}
\titleformat{\subsubsection}[block]{\itshape}{\bfseries\upshape\sffamily\thesubsubsection.}{1ex}{}

\titlespacing*{\section}{0pt}{*3}{*1}
\titlespacing*{\subsection}{0pt}{*3}{*1}
\titlespacing*{\subsubsection}{0pt}{*2}{*1}

\titlecontents{section}[1.5em]{\smallskip}{\bfseries\thecontentslabel\hspace{1.02em}}{\bfseries}{\,\,\titlerule*[0.77pc]{}\bfseries\contentspage}
\titlecontents{subsection}[4em]{\smallskip}{\thecontentslabel\hspace{1.02em}}{\hspace*{2.32em}}{\,\,\titlerule*[0.77pc]{.}\contentspage}

\renewbibmacro{in:}{}
\renewbibmacro*{volume+number+eid}{\printfield{volume}\setunit*{\addnbspace}\printfield{number}\setunit{\addcomma\space}\printfield{eid}}
\DeclareFieldFormat[article]{volume}{\textbf{#1}\space}
\DeclareFieldFormat[article]{number}{\mkbibparens{#1}}
\DeclareFieldFormat{journaltitle}{#1,}
\DeclareFieldFormat[thesis]{title}{\mkbibemph{#1}\addperiod}
\DeclareFieldFormat[article, unpublished, thesis, incollection, inproceedings]{title}{\mkbibemph{#1},}
\DeclareFieldFormat[book]{title}{\mkbibemph{#1}\addperiod}
\DeclareFieldFormat[unpublished]{howpublished}{#1, }
\DeclareFieldFormat{pages}{#1}
\DeclareFieldFormat[article]{series}{Ser.~#1\addcomma}

\setlist{topsep=3pt,itemsep=3pt}

\makeatletter
\newenvironment{breakablealgorithm}
  {\begin{center}\refstepcounter{algorithm}\hrule height .8pt depth 0pt \kern 2pt
   \renewcommand{\caption}[2][\relax]{%
     {\raggedright\textbf{\fname@algorithm~\thealgorithm} ##2\par}%
     \ifx\relax##1\relax
       \addcontentsline{loa}{algorithm}{\protect\numberline{\thealgorithm}##2}%
     \else
       \addcontentsline{loa}{algorithm}{\protect\numberline{\thealgorithm}##1}%
     \fi
     \kern 2pt\hrule\kern 2pt}}
  {\kern 2pt\hrule\relax\end{center}}
\makeatother

\title{\sffamily The List Coloring Number of Uncrowded Hypergraphs}
\date{}

\author[J. Yu]{Jing~Yu}
\author[J. Zhang]{Junchi~Zhang}

\begin{document}

\begin{abstract}
We prove that for every fixed integer $r\geq 2$ and every $\varepsilon>0$, every sufficiently large finite uncrowded $(r+1)$-uniform hypergraph of maximum degree $\Delta$ has list chromatic number at most
\[
        (1+\varepsilon)
        \left(\frac{r\Delta}{\log\Delta}\right)^{1/r}.
\]
The proof is a semi-random list-coloring nibble carried out directly on the original hypergraph.  We encode the remaining coloring problem by active edge-color constraints and control all residual sizes through a binomial degree bound. After the nibble reaches a sparse terminal state, the coloring is completed by a Rosenfeld-style counting argument.
\end{abstract}

\maketitle
\authorfootnote{(JY) Shanghai Center for Mathematical Sciences, Fudan University, Shanghai, China; 
e-mail: \texttt{jyu@fudan.edu.cn}. Partially supported by the National Natural Science Foundation of China grant 12371343 and 12525110 (PI: Hehui Wu).\\[2pt]
(JZ) Shanghai Center for Mathematical Sciences, Fudan University, Shanghai, China; 
e-mail: \texttt{jczhang24@m.fudan.edu.cn}.}

\section{Introduction}

\subsection{Background and the main result}

Let $G$ be a hypergraph.  A \emphd{proper coloring} of $G$ is an assignment of colors to the vertices such that no edge is monochromatic.  In the list version, each vertex $v$ is assigned a list $L(v)$ of allowed colors, and one asks for a proper coloring $\phi$ with $\phi(v)\in L(v)$ for every $v$.  The \emphd{list chromatic number} $\chi_\ell(G)$ is the least integer $q$ such that such a coloring exists for every assignment of lists of size $q$.

For graphs, the guiding example for list chromatic number is the triangle-free case. Johansson \cite{Johansson} proved that triangle-free graphs of maximum degree $\Delta$ have list chromatic number $O(\Delta/\log\Delta)$, extending the logarithmic improvement of Ajtai, Koml\'os and Szemer\'edi \cite{Ajtai1980Ramsy} for triangle-free graphs to list colorings.
Up to now, the best constant for the list chromatic number is due to Molloy \cite{Molloy19}, who improve the upper bound to $(1+o(1))\Delta/\log\Delta$. Later, Bernshteyn \cite{BernshteynDP} gave a short proof of the Johansson--Molloy theorem in the more general setting of DP-coloring.  These results show that local sparsity can reduce the list chromatic number far below the greedy bound.

As for uniform hypergraphs, Ajtai, Koml\'os, Pintz, Spencer and Szemer\'edi \cite{Spencer1982hyper} prove that the independence number of $n$-vertex uniform hypergraphs with \emph{girth} at least 5 is at least $n\log\Delta/\Delta$, and for the coloring side, the natural coloring scale for $(r+1)$-uniform hypergraphs is correspondingly
\[
        \left(\frac{\Delta}{\log\Delta}\right)^{1/r}.
\]
The bound was extended in several directions with different constants and under slightly different assumptions.
Frieze and Mubayi \cite{FriezeMubayi} proved that simple uniform hypergraphs have chromatic number at most $C_r(\Delta/\log\Delta)^{1/r}$ for a large $r$-dependent constant; Li and Postle \cite{LiPostle} developed a general rank-$k$ triangle-free theory in terms of all layer degrees; 
related list-coloring questions for random and locally sparse instances were studied by Achlioptas and Molloy, Vu, and others \cite{AchlioptasMolloy,VuRandomHypergraphs,VuLocallySparse}.
See also the survey \cite{NibbleSurvey}.

The closest previous result to the present theorem is due to Iliopoulos \cite{Fotis2021hyper}, considering list chromatic number for uniform hypergraphs with girth at least 5. For $k\geq 2$, a \emphd{Berge $k$-cycle}, denoted by $C_k$, is a sequence of distinct vertices $x_1,\ldots,x_k$ and distinct edges $e_1,\ldots,e_k$ such that $\{x_j,x_{j+1}\}\subseteq e_j$ for every $j$, with indices modulo $k$. A hypergraph has girth $k$ if it contains no $C_i$ for $2 \le i < k$.
The \emphd{girth} of an uniform hypergraph $G$ is the least integer $q\geq 2$ such that $G$ contains no Berge $k$-cycle for $2\leq k<q$. A hypergraph is called \emphd{uncrowded} if it has girth at least 5.
Iliopoulos \cite{Fotis2021hyper} proves that every $k$-uniform hypergraph of maximum degree $\Delta$ and girth at least $5$ is efficiently
\[
        (1+o(1))(k-1)\left(\frac{\Delta}{\log\Delta}\right)^{1/(k-1)}
\]
list-colorable.  With $k=r+1$, this gives leading constant $r$.  Our theorem improves the leading constant $r$ to $r^{1/r}$.  We do not pursue the algorithmic conclusions of \cite{Fotis2021hyper}; our focus is the extremal list-coloring bound and the sharper constant.
The exclusion of these short cycles is the exact geometric input needed to make the branches in our nibble independent after the appropriate conditioning.

Our main theorem identifies the asymptotic constant in the following setting.

\begin{theo}\label{thm:main}
For every integer $r\geq2$ and every $\eps>0$ there exists $\Delta_0=\Delta_0(r,\eps)$ such that the following holds.  If $G$ is a finite uncrowded $(r+1)$-uniform hypergraph with maximum degree at most $\Delta\geq\Delta_0$, then
\[
        \chi_\ell(G)\leq (1+\eps)\left(\frac{r\Delta}{\log\Delta}\right)^{1/r}.
\]
\end{theo}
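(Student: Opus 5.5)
We would prove \Cref{thm:main} by a semi-random nibble on lists, in the style of the Molloy/Bernshteyn argument but run directly on the hypergraph. Set $q=(1+\eps)(r\Delta/\log\Delta)^{1/r}$ and fix lists $L(v)$ of size $q$.

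\textbf{State of the process.} In each round every uncolored vertex $v$ activates a small random set of colors from its current list $L_i(v)$, each independently with probability $\eta/|L_i(v)|$. A vertex keeps an activated color $c$ unless $c$ would complete a monochromatic edge. Colored vertices are removed, and the lists and constraints are updated. We track the remaining problem through \emph{active edge-color constraints}. For an edge $e$ and a color $c$, the pair $(e,c)$ is active as long as no vertex of $e$ has received a color other than $c$ and $c$ is still available at the uncolored vertices of $e$. Let $j$ be the number of vertices of $e$ already colored $c$. Then the constraint effectively forbids $c$ on the remaining $r+1-j$ vertices. The key parameters are:
\begin{itemize}
\item $\ell_i(v)=|L_i(v)|$;
\item $d_{i,j}(v,c)$, the number of active constraints through $v$ in which $c$ has already been placed on $j$ other vertices of the edge.
\end{itemize}
The aim is a single \emph{binomial degree bound}: $d_{i,j}(v,c)$ should stay below roughly $\binom{r}{j}\Delta\,p_i^{\,r-j}\cdots$, with one unified scaling, so that one inductive invariant controls every layer at once. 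This matches the heuristic that a random coloring from lists of size $\ell$ kills a color at $v$ with rate $\Delta/\ell^r$. The constant $r^{1/r}$ should come out of integrating the ODE for $\ell$ against the $j=r$ layer: lists shrink like $e^{-t}$ while the top-layer degree grows, and the process runs until $\Delta(\text{top layer})/\ell^r\approx (\log\Delta)/r$. That is the point where $q^r=(1+\eps)^r r\Delta/\log\Delta$ suffices.

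\textbf{One round.} For each round we will:
\begin{enumerate}
\item compute the expectations of $\ell_{i+1}(v)$ and $d_{i+1,j}(v,c)$ given the current state, using an equalizing coin flip (as in Molloy/Bernshteyn) so that each color survives at $v$ with exactly the same probability $\text{Keep}_i$;
\item prove concentration of these quantities via Talagrand's inequality or a bounded-differences argument with exceptional outcomes;
\item apply the Lovász Local Lemma, whose dependency degree is polynomial in $\Delta$, to find a good outcome for the round.
\end{enumerate}
Uncrowdedness (girth at least $5$) is what makes the events at distinct edges through $v$, and their neighbors, depend on disjoint vertex sets. This is true after conditioning on the colors at $v$. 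Without girth $5$, two edges sharing two vertices, or a $C_3$ or $C_4$, would correlate the branches. This gives near-independence, so the survival probabilities multiply and the expectations match the ODE.

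\textbf{Stopping and finishing; the main obstacle.} We stop after $T=O(\log\Delta)$ rounds, once every list satisfies $\ell_T(v)\geq \Delta^{\eps'}$ and for each $j$ the residual degrees satisfy $\sum_c d_{T,j}(v,c)\ll \ell_T^{\,r+1-j}/\log$-factors. In this sparse terminal state we complete the coloring by a Rosenfeld-style counting argument, which is the inductive "number of good partial colorings grows by a factor" method. Each uncolored vertex faces few active constraints compared with its list, so counting extensions vertex by vertex shows that a proper completion exists. The main obstacle is the round analysis for the intermediate layers $1\le j<r$. Their degrees both grow (constraints are promoted from layer $j-1$) and shrink (constraints are killed), and the promotion terms have to be absorbed into the binomial bound with errors small enough to survive $\log\Delta$ rounds. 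We would first try to choose the invariant as $d_j\le \binom{r}{j}(1+\delta_i)\Delta\, x_i^{j}/\ell_i^{\,j}\cdot(\ell_i/q)^{r}$-type expressions and check that the recursion closes exactly with a slowly growing $\delta_i$.
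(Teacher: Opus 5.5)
\textbf{Gap: the degree invariant.} Your architecture matches the paper's: active edge--color constraints, equalizing coins, Talagrand plus the local lemma in each round, and a Rosenfeld-type count at the end. But you leave the degree invariant open, and it is the actual content of the proof. The candidate you write down cannot work.

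Take $j=0$, where no vertex is yet colored $c$. Your bound reads $d_{i,0}(v,c)\le(1+\delta_i)\Delta(\ell_i/q)^r$, so in every round
\[
d_{i,0}/\ell_i^{\,r}\le(1+\delta_i)\Delta/q^r\approx\log\Delta/(r(1+\eps)^r).
\]
The factor $(\ell_i/q)^r$ records only that the other $r$ vertices still have $c$ in their lists, not that they are still uncolored. Consequences:
\begin{itemize}
\item The terminal condition you need ($d_{T,0}\ll\ell_T^{\,r}$; the counting argument needs $\sum_\ell D_\ell/s^\ell$ below a small absolute constant) can never be certified.
\item Nothing in the bound makes the loss rate of colors die out, so there is no natural stopping time.
\end{itemize}

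\textbf{What the paper does instead.} It uses a second shrinkage factor for staying uncolored. Here $\ell$ counts the other uncolored vertices of a constraint and $s$ is the list size. The paper keeps
\[
D_\ell\le\binom r\ell B^{r-\ell}d^{\ell/r}.
\]
Per step, $s$ is multiplied by about $\gamma$, while $d^{1/r}$ is multiplied by about $\sigma\gamma$ with $\sigma\approx1-\theta$. With activation probability $\theta=h/x$, the ratio $x=d^{1/r}/s$ drops by about $h$ per step while $B$ rises by $h$. The process stops once $x\le\eta x_0^{-(r-1)}$, which is small enough to beat $B^{r-1}\le(2x_0)^{r-1}$ in the bound $D_\ell/s^\ell\le\binom r\ell B^{r-\ell}x^\ell$.

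The recursion closes through a one-edge estimate. Within one residual set, the probability that $m-\ell$ prescribed vertices choose $c$ while the other $\ell$ stay uncolored and keep $c$ is at most $p^{m-\ell}(\sigma\gamma)^\ell$. Combine this with the identity $\sum_m\binom rm\binom m\ell B^{r-m}h^{m-\ell}=\binom r\ell(B+h)^{r-\ell}$. Proving the one-edge estimate requires handling dependence \emph{inside} a single edge: the edge can block one of its own vertices, an $O(p)$ effect absorbed into $\sigma$. Your remark about independence across distinct edges through $v$ does not address this.

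\textbf{The constant.} Your account of where $r^{1/r}$ comes from is also off. The full-size layer never grows, and $\Delta/q^r$ is the initial value of $x^r$, not a stopping criterion. In the paper the per-step loss rate of a color is $\tau_i\le(B_i+h)^r-B_i^r$. This telescopes to $B_T^r\le((1+\delta)x_0)^r$, where $x_0^r=\Delta/s_0^r$. So lists shrink overall by a factor of about $\exp(-\Delta/s_0^r)$. Starting from $s_0=\Delta^{1/r-o(1)}$, this leaves $s_T\ge\Delta^{\alpha}$ precisely because $\Delta/s_0^r$ is a constant factor below $\frac1r\log\Delta$, i.e.\ $s_0\ge(1+\eps/2)(r\Delta/\log\Delta)^{1/r}$.

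A smaller point, which is repairable unlike the invariant: with your constant-rate activation, the per-round loss rate is about $rB^{r-1}x\eta$ with $B,x=\Theta((\log\Delta)^{1/r})$. So $\eta$ must be $O(\eps/\log\Delta)$, and then $x$ decays only geometrically. That gives about $\eta^{-1}\log\log\Delta$ rounds, not $O(\log\Delta)$.
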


The constant in our theorem matches the color scale predicted by the shattering picture for sparse random hypergraphs.  For random $k$-uniform hypergraphs of bounded average degree $d$, the expected algorithmic barrier for coloring occurs around
\[
        \left(\frac{(k-1)d}{\log d}\right)^{1/(k-1)}.
\]
This barrier is tied to the \emph{shattering transition} in the space of colorings and to related phase-transition phenomena in random constraint satisfaction problems \cite{AchlioptasCojaOghlan,AyreCojaOghlanGreenhill,GabrieDaniSemerjianZdeborova}. In the notation $k=r+1$, the shattering scale becomes $(rd/\log d)^{1/r}$, the same leading constant as in Theorem~\ref{thm:main} when $d$ is replaced by the maximum degree.  This is a heuristic and algorithmic benchmark rather than a lower bound for the extremal list chromatic number, but it explains why the constant $r^{1/r}$ is the natural target.

Our proof keeps the semi-random philosophy but work directly with the original hypergraph and use equalizing coins to make each fixed color have the same marginal chance of staying in the temporary list. At a partial coloring state $Q=(U,L,\phi)$, an original edge $e$ contributes an active constraint for a color $c$ exactly when the already colored vertices of $e$, if any, all have color $c$, and all remaining vertices of $e$ still contain $c$ in their lists.  For each $1\leq \ell\leq r$, the quantity $D_{\ell,Q}$ is the maximum number of active constraints of residual size $\ell+1$ through a fixed vertex-color pair.

\subsection{Methodological connection with the independent-set nibble}

The present proof is closely parallel to the independent-set nibble in \cite{YuZhangIndependent}.  In that work, after a random sample $A$ is chosen, an original edge $e$ can leave a trace $e\cap K$ on the surviving vertex set $K$.  Since traces of all sizes $2,\ldots,r+1$ appear, the proof tracks every layer.  If $d(k)$ denotes the current top-layer average degree and $D_\ell(H'_k)$ denotes the maximum $\ell$-degree of the residual trace hypergraph, the normalized coefficients
\[
        c_{\ell,k}=\frac{D_\ell(H'_k)}{d(k)^{\ell/r}}
\]
satisfy a binomial-type upper bound
\[
        c_{\ell,k}\lesssim \binom r\ell (kh)^{r-\ell}.
\]
Equivalently,
\[
        D_\ell(H'_k)
        \lesssim
        \binom r\ell (kh)^{r-\ell}d(k)^{\ell/r}.
\]
This is the same degree form that appears in the coloring proof.

In the coloring problem, the residual objects are not ordinary traces but color-indexed active constraints.  For each color $c$, an edge can still become monochromatic in color $c$ only if all already colored vertices on that edge have color $c$ and all remaining vertices still have $c$ available.  We therefore track
\[
        D_{\ell,Q_i}
        \leq
        \binom r\ell B_i^{r-\ell}d_i^{\ell/r},
        \qquad 1\leq \ell\leq r.
\]
Here $B_i$ and $d_i$ are deterministic degree-tracking parameters, not exact degrees of the current state.  The important point is that this binomial upper-bound is preserved under one nibble step: an old residual constraint of size $m+1$ can become one of size $\ell+1$ by choosing the $\ell$ surviving non-center vertices, and this gives the identity
\[
\sum_{m=\ell}^r
        \binom rm\binom m\ell
        B_i^{r-m}h^{m-\ell}
=
        \binom r\ell(B_i+h)^{r-\ell}.
\]
Thus the lower layers are not error terms.  They are part of the main upper-bound, and controlling them in this closed form is what allows the constant $r^{1/r}$.

The logical relation between the two results is also useful.  Taking the largest color class in Theorem~\ref{thm:main} gives the maximum-degree independent-set bound
\[
        \alpha(G)
        \geq
        (1-o(1))r^{-1/r}
        \left(\frac{\log\Delta}{\Delta}\right)^{1/r}|V(G)|.
\]
Combined with the maximum-degree-to-average-degree transfer theorem of Yu and Zhang \cite{YuZhang2026Transfer}, this yields the average-degree independent-set theorem for uncrowded $(r+1)$-uniform hypergraphs.  This implication is not just a formal byproduct: both proofs use the same binomial upper-bound for lower-dimensional residual constraints.  The independent-set proof accumulates selected vertices over many rounds, while the coloring proof drives the difficulty parameter
\[
        x_i=\frac{d_i^{1/r}}{s_i}
\]
down to a terminal regime, where $s_i$ is the size of the lists at step $i$.

\subsection{Why active constraints are needed}

The main technical complication in the coloring proof is that constraints change size during a partial coloring.  Consider the case $r=2$, so that the hypergraph is $3$-uniform, and let
\[
        e=\{a,b,c\}.
\]
If $a$ has already been colored red and $b,c$ are still uncolored, then the original condition ``$e$ is not monochromatic'' has become the residual condition
\[
        \text{not both } b\text{ and }c\text{ are colored red.}
\]
This is a two-vertex constraint in color red.  If, instead, the already colored vertices of $e$ contain two different colors, then $e$ can never become monochromatic and contributes no future constraint.  Finally, if all but one vertex of $e$ have already been colored red, then red must simply be removed from the last vertex's list.

We encode this directly in the original hypergraph.  At a state $Q=(U,L,\phi)$, where $U$ is the uncolored set, define
\[
        R_Q(e)=e\cap U.
\]
A color $c$ is active for $e$ if $|R_Q(e)|\geq2$, all vertices of $R_Q(e)$ still have $c$ in their lists, and every already colored vertex of $e$ has color $c$.  The active pair $(e,c)$ means that the remaining vertices $R_Q(e)$ are forbidden from all receiving color $c$.  This language keeps the proof tied to the original edge set throughout the argument.

\subsection{Proof overview}
We give the main idea of the proof.  At a typical stage of the process, all uncolored vertices have lists of the same size, say $s$.  For each $1\leq \ell\leq r$, let $D_\ell$ be the maximum number of active constraints of residual size $\ell+1$ through a fixed vertex-color pair.  We control all these quantities by two deterministic tracking parameters $B$ and $d$:
\[
        D_\ell
        \leq
        \binom r\ell B^{r-\ell}d^{\ell/r}
        \qquad(1\leq \ell\leq r).
\]
Here $B$ should be thought of as the amount of nibble time that has elapsed, and $d$ is the current degree scale.  
Initially $B=0$ and $d=\Delta$, so only the top layer $\ell=r$ is present.  
The main parameter is
\[
        x=\frac{d^{1/r}}{s}.
\]
The proof is arranged so that each nibble step decreases $x$ by about a fixed amount $h$.

In one step, each uncolored vertex is activated with probability
\[
        \theta=\frac{h}{x}.
\]
If activated, it chooses a tentative color uniformly from its list. Thus, for a fixed color $c$ at a fixed vertex,
\[
        p=\frac{\theta}{s}=hd^{-1/r}
\]
is the probability that this particular color is tentatively chosen.  
A tentative color $c$ is declared \emph{safe} at a vertex $v$ if no active constraint through $(v,c)$ has all its other residual vertices tentatively colored $c$. If $v$ chooses $c$ and $c$ is safe, then $v$ is permanently colored $c$.

For a fixed pair $(v,c)$, the short-cycle assumptions make the different active constraints through $(v,c)$ independent.  Hence
\[
        \Pr(\Safe(v,c))
        =
        \prod_{\ell=1}^r (1-p^\ell)^{d_\ell(v,c)}.
\]
The degree bound gives
\[
        \sum_{\ell=1}^r d_\ell(v,c)p^\ell
        \leq
        \sum_{\ell=1}^r \binom r\ell B^{r-\ell}h^\ell
        =
        (B+h)^r-B^r.
\]
We denote this last quantity, up to the harmless use of the actual degrees, by $\tau$.  It is the local rate at which a fixed color can be made unsafe.

The safe probabilities may vary with $v$ and $c$.  To remove this irregularity, we use equalizing coins.  These coins make every pair $(v,c)$ have the same probability, denoted by $\gamma$, of keeping color $c$ in the temporary list.  Consequently the expected new list size is simply $\gamma s$, so the proof only has to track the single list-size parameter $s$, not the different safe probabilities of all colors.

The key estimate concerns what happens to one active constraint.  Suppose an edge $e$ is active for color $c$, and suppose its residual set has size $m+1$.  If after one step it leaves a residual active set of size $\ell+1$ through a fixed vertex $v$, then $m-\ell$ of the other vertices must have tentatively chosen $c$, while the remaining $\ell$ vertices must stay uncolored and keep $c$ in their temporary lists.  The probability of this is bounded by
\[
        p^{m-\ell}(\sigma\gamma)^\ell,
\]
where
\[
        \sigma=1-\theta+C_0(\theta\tau+p).
\]
The term $1-\theta$ corresponds to vertices that are not activated. The term $\theta\tau$ accounts for activated vertices that are prevented from being colored by an external unsafe witness.  The term $p$ accounts for the possibility that the same original edge $e$ itself blocks a vertex.  This is of order $p$, not $\theta p$, because the event $Y_w=b$ already includes both activation and the choice of the color $b$.

Summing this estimate over the old layers gives the binomial calculation
\[
\begin{aligned}
\sum_{m=\ell}^r
        \binom rm\binom m\ell
        B^{r-m}d^{m/r}p^{m-\ell}(\sigma\gamma)^\ell
&=
\sum_{m=\ell}^r
        \binom rm\binom m\ell
        B^{r-m}d^{m/r}(hd^{-1/r})^{m-\ell}(\sigma\gamma)^\ell        \\
&=
        \binom r\ell(B+h)^{r-\ell}
        \bigl(d(\sigma\gamma)^r\bigr)^{\ell/r}.
\end{aligned}
\]
This is the reason for the binomial form of the degree bound.  A one-step nibble replaces $B$ by $B+h$, and replaces $d^{1/r}$ essentially by $\sigma\gamma d^{1/r}$.  At the same time the list size is essentially multiplied by $\gamma$.  Thus the factor $\gamma$ cancels in $x=d^{1/r}/s$, and the main change is
\[
        x\mapsto \sigma x
        =
        x-h+\text{small error}.
\]

Talagrand's inequality controls the temporary list sizes, Bernstein's inequality controls the new active degrees, and the local lemma is used only to choose one random outcome for which all these estimates hold simultaneously.  Iterating the one-step estimate drives $x$ down to a small terminal value.  At that point every layer satisfies
\[
        D_\ell\leq \eta_0s^\ell,
        \qquad 1\leq \ell\leq r,
\] 
and a short counting argument finishes the coloring of the remaining vertices.

Throughout the paper, all logarithms are natural.  For a positive integer $r$, write $[r]=\{1,\ldots,r\}$. The original hypergraph is denoted $G=(V,E)$, and all residual constraints are always defined using original edges $e\in E$.

In \Cref{sec:preliminaries}, we collect probabilistic tools, define active states and compatibility, and record the separation consequences of uncrowdedness.  In \Cref{sec:onestep}, we prove the one-step nibble lemma.  In \Cref{sec:iteration}, we choose the initial parameters and iterate the one-step lemma until the terminal regime.  In \Cref{sec:terminal}, we finish the proof with the counting lemma.

\subsection*{Note added}
Recently, the independence-number bound matching the shattering threshold for uncrowded hypergraphs was obtained independently and concurrently by Dhawan, Methuku and Vo~\cite{DhawanMethukuVo} and by the authors~\cite{YuZhangIndependent}. The present paper proves the list-coloring analogue, developed independently as a natural continuation of our approach in~\cite{YuZhangIndependent}. We note that Dhawan, Methuku and Vo~\cite{DhawanMethukuVo} also announced forthcoming work proving the same list-coloring bound; see~\cite[Theorem~1.11]{DhawanMethukuVo}. At the time the first version of the present paper was posted, that forthcoming manuscript was not publicly available, and hence we do not attempt to compare the methods.

\section{Preliminaries}\label{sec:preliminaries}

\subsection{Constants and conventions}\label{sec:constants}

Replacing $\eps$ by $\min\{\eps,1\}$ only strengthens the theorem, so throughout the proof we assume
\[
        0<\eps\leq1.
\]
The uniformity parameter $r\geq2$ is fixed.  We choose constants, depending only on $r$ and $\eps$, as follows:
\begin{equation}\label{eq:constants-fixed}
    C_0=2^{30r+100}r^{20},\qquad
    C_1=20C_0,\qquad
    K=2^{40r+100}r^{40},\qquad
    \eta_0=\frac1{64r(r+1)}.
\end{equation}
Set
\begin{equation}\label{eq:delta-fixed}
    \delta=\frac{\eps}{100r},
\end{equation}
and choose
\begin{equation}\label{eq:rho-eta-fixed}
    \rho\leq \min\left\{\frac1{100r},\frac1{100\cdot2^rC_0},\frac{\delta}{100\cdot2^rC_1}\right\},
    \qquad
    \eta\leq \min\left\{2^{-r-3},\frac{\eta_0}{4^r},\frac{\delta}{100\cdot2^rC_1}\right\}.
\end{equation}
When the maximum degree $\Delta$ is fixed in \Cref{sec:iteration}, we put
\begin{equation}\label{eq:zeta-fixed}
        \zeta=(\log\Delta)^{-4}.
\end{equation}
We shall repeatedly use the elementary estimates
\begin{equation}\label{eq:elementary-log}
        \log(1-z)\geq -z-z^2,
        \qquad
        (1-z)^{-1}\leq1+2z
        \qquad(0\leq z\leq1/2),
\end{equation}
and
\begin{equation}\label{eq:elementary-powers}
        (1+u)^i\leq1+2riu,
        \qquad
        (1-u)^i\geq1-riu
        \qquad\left(0\leq u\leq\frac1{10r},\ 1\leq i\leq r\right).
\end{equation}
Finally, if $hB^{r-1}\leq\rho$ and $h^r\leq\rho$, then
\begin{equation}\label{eq:tau-elementary}
        (B+h)^r-B^r\leq2^r\rho.
\end{equation}
Indeed, for $1\leq j\leq r$,
\[
        B^{r-j}h^j
        =(hB^{r-1})^{(r-j)/(r-1)}(h^r)^{(j-1)/(r-1)}\leq\rho,
\]
and the binomial coefficients sum to less than $2^r$.

\subsection{Probabilistic tools}

We shall use the following standard concentration inequalities.  The constants are deliberately non-optimal; throughout the proof all constants depend only on the fixed uniformity parameter $r$.

\begin{theo}[Talagrand's inequality \cite{MR14}]\label{thm:talagrand}
Let $X$ be a non-negative integer-valued random variable, not identically zero, and suppose that $X$ is determined by independent trials $T_1,\ldots,T_n$.
Assume that, for some $\mu,\rho>0$, the following two conditions hold.
\begin{enumerate}[label=\textup{(T\arabic*)}]
    \item Changing the outcome of any one trial $T_i$ can change $X$ by at most $\mu$.
    \item For every integer $s\geq 1$, if $X\geq s$, then there is a set of at most $\rho s$ trials certifying that $X\geq s$.
\end{enumerate}
Then, for every $t\geq0$,
\[
\Pr\left[
    |X-\E X|
    \geq t+20\mu\sqrt{\rho\E X}+64\mu^2\rho
\right]
\leq
4\exp\left(
    -\frac{t^2}{8\mu^2\rho(\E X+t)}
\right).
\]
\end{theo}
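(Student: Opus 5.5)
This statement is the version of Talagrand's inequality recorded in \cite{MR14}; I would not prove it from scratch but reduce it to Talagrand's convex-distance inequality on product spaces, following the route of Molloy and Reed. Let $\Omega=\prod_i\Omega_i$ be the space of outcomes of the trials $T_1,\ldots,T_n$. For $A\subseteq\Omega$, the convex distance $d_T(x,A)$ is the supremum over unit vectors $\alpha\geq0$ of $\min_{y\in A}\sum_{i:x_i\neq y_i}\alpha_i$. The input is Talagrand's theorem
\[
\Pr(A)\,\Pr\bigl(d_T(\cdot,A)\geq t\bigr)\leq e^{-t^2/4}.
\]

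\emph{Step 1: a two-level inequality.} For reals $b$ and $t\geq0$, set $A=\{X\leq b-t\mu\sqrt{\rho b}\}$. I would show that every outcome $x$ with $X(x)\geq b$ satisfies $d_T(x,A)\geq t$. Take a certificate $I$ of size at most $\rho b$ witnessing $X\geq b$, and put weight $\alpha_i=|I|^{-1/2}$ on $I$. If $y\in A$ agreed with $x$ except on a set of $I$ of $\alpha$-weight less than $t$, it would differ from $x$ on fewer than $t\sqrt{\rho b}$ coordinates of $I$. Change $y$ to agree with $x$ on $I$; each change moves $X$ by at most $\mu$ by (T1). The modified outcome agrees with $x$ on $I$, so by (T2) its $X$-value is at least $b$. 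Hence $X(y)>b-t\mu\sqrt{\rho b}$, contradicting $y\in A$. Therefore
\[
\Pr(X\leq b-t\mu\sqrt{\rho b})\,\Pr(X\geq b)\leq e^{-t^2/4}.
\]

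\emph{Step 2: concentration about the median.} Let $M$ be a median of $X$. Apply Step 1 with $b=M$ for the lower tail. For the upper tail, take $b=M+s$ and choose $t$ so that $b-t\mu\sqrt{\rho b}=M$. This gives
\[
\Pr(|X-M|\geq s)\leq 4\exp\Bigl(-\frac{s^2}{8\mu^2\rho(M+s)}\Bigr).
\]
The factor $2$ is lost because $\Pr(X\leq M)\geq1/2$ and $\Pr(X\geq M)\geq1/2$. The $M+s$ in the denominator comes from $b=M+s$ in the upper tail.

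\emph{Step 3: median versus mean, which I expect to be the main obstacle.} I would bound $|\E X-M|$ by integrating the tail bound of Step 2:
\[
|\E X-M|\leq\int_0^\infty\Pr(|X-M|\geq s)\,ds .
\]
Split the integral at $s\approx\sqrt{\mu^2\rho M}$ and at $s\approx\mu^2\rho$. The sub-Gaussian range contributes $O(\mu\sqrt{\rho M})$ and the exponential range contributes $O(\mu^2\rho)$.

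To replace $M$ by $\E X$, note that $M\leq 2\E X$ by Markov, since $X\geq0$. Also $\sqrt{M}\leq\sqrt{\E X}+O(\mu\sqrt\rho)$, which follows by the same integration. The care needed here is only in tracking explicit constants, which are then enlarged to the stated $20$ and $64$.

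\emph{Step 4: conclusion.} On the event $|X-\E X|\geq t+20\mu\sqrt{\rho\E X}+64\mu^2\rho$, Step 3 gives $|X-M|\geq t'$ with $t'\geq t$. Applying Step 2 with $s=t'$ gives the bound with $M+t'$ in the denominator. The function $s^2/(M+s)$ is increasing in $s$, so we may replace $t'$ by $t$. Absorbing $M\leq\E X+O(\mu\sqrt{\rho\E X}+\mu^2\rho)$ into the deliberately generous constants yields the stated inequality. The non-identically-zero hypothesis only guarantees that the median and the certificates are meaningful.
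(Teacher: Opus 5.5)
The paper does not prove this statement; it quotes it from \cite{MR14}. Your reduction (convex distance, then the two-level inequality, then concentration about a median, then median versus mean) is the standard way this form is derived, and Step~1 is correct. The gap is in the constant bookkeeping of Steps~2--4. This matters because $20$, $64$ and the $8$ in the exponent are part of the statement.

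Your Step~1 actually gives $\Pr(|X-M|\geq s)\leq 4\exp\bigl(-s^2/(4\mu^2\rho(M+s))\bigr)$. The two median probabilities $\geq 1/2$ account for the prefactor $4$, not for a factor $2$ in the exponent. By weakening $4$ to $8$ in Step~2, you spend exactly the factor you need later.

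Step~4 then fails as ordered. After replacing $t'$ by $t$ you hold $4\exp\bigl(-t^2/(8\mu^2\rho(M+t))\bigr)$, which is strictly weaker than the target when $M>\E X$. Nothing is left to absorb $M-\E X$ into, because $20$ and $64$ sit in the deviation threshold rather than in the exponent. Using that threshold slack would require keeping $t'$, and as $t\to\infty$ one needs $t'-t\geq M-\E X$.

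Moreover, with the weakened exponent the straightforward integration (split at $s=M$) only gives $|\E X-M|\leq 8\sqrt{\pi}\,\mu\sqrt{\rho M}+64\mu^2\rho\leq 8\sqrt{2\pi}\,\mu\sqrt{\rho\E X}+64\mu^2\rho$. Since $8\sqrt{2\pi}\approx 20.05>20$, the constants are not ``deliberately generous'' under your bookkeeping; they are essentially tight.

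The repair is to keep the $4$. Splitting at $s=M$ then gives
\[
|\E X-M|\leq\E|X-M|\leq 4\sqrt{2\pi}\,\mu\sqrt{\rho M}+32\mu^2\rho\leq 8\sqrt{\pi}\,\mu\sqrt{\rho\E X}+32\mu^2\rho,
\]
using only $M\leq 2\E X$. This is below $20\mu\sqrt{\rho\E X}+64\mu^2\rho$, so the event in the statement forces $|X-M|\geq t$. Its probability is therefore at most
\[
4\exp\bigl(-t^2/(4\mu^2\rho(M+t))\bigr)\leq 4\exp\bigl(-t^2/(8\mu^2\rho(\E X+t))\bigr),
\]
since $M+t\leq 2(\E X+t)$. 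That is where the $8$ comes from, and the finer estimate $\sqrt{M}\leq\sqrt{\E X}+O(\mu\sqrt{\rho})$ is unnecessary.

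Two minor points:
\begin{itemize}
\item (T2) is stated only for integer $s$. Take $M$ to be an integer median, and in the upper tail replace $s$ by $\lceil s\rceil$; this is harmless since $s^2/(M+s)$ is increasing.
\item The hypothesis that $X$ is not identically zero serves to make $\E X>0$, so that the right-hand side is defined at $t=0$.
\end{itemize}
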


\begin{lemma}[Bernstein's inequality]\label{lem:bernstein}
Let $X=\sum_j X_j$, where the $X_j$ are independent random variables with $0\leq X_j\leq R$.  If $\E X\leq M$ and $0<u\leq1$, then
\[
        \P(X>(1+u)M)\leq
        \exp\left(-\frac{u^2M}{4R}\right).
\]
\end{lemma}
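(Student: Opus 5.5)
The plan is the standard exponential-moment (Chernoff) argument, after rescaling so that the summands lie in $[0,1]$. Put $Y_j=X_j/R\in[0,1]$, $Y=\sum_j Y_j=X/R$, and $m=M/R$. Then $\E Y\leq m$, and the event $X>(1+u)M$ is exactly the event $Y>(1+u)m$.

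First, I bound the moment generating function. Fix $\lambda>0$. Since $y\mapsto e^{\lambda y}$ is convex on $[0,1]$, we have $e^{\lambda y}\leq 1+(e^\lambda-1)y$ for $y\in[0,1]$. Hence
\[
\E e^{\lambda Y_j}\leq 1+(e^\lambda-1)\E Y_j\leq \exp\bigl((e^\lambda-1)\E Y_j\bigr).
\]
By independence,
\[
\E e^{\lambda Y}\leq \exp\bigl((e^\lambda-1)\E Y\bigr)\leq \exp\bigl((e^\lambda-1)m\bigr).
\]
The second inequality uses $e^\lambda-1>0$ and $\E Y\leq m$. This is the only place where the hypothesis $\E X\leq M$, rather than equality, enters.

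Next, I apply Markov's inequality to $e^{\lambda Y}$:
\[
\P\bigl(Y>(1+u)m\bigr)\leq \exp\bigl((e^\lambda-1)m-\lambda(1+u)m\bigr).
\]
Choosing $\lambda=\log(1+u)$, which is positive since $u>0$, gives the bound
\[
\exp\bigl(-m\,[(1+u)\log(1+u)-u]\bigr).
\]

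It remains to check the elementary inequality $(1+u)\log(1+u)-u\geq u^2/4$ for $0\leq u\leq 1$. Let $f(u)=(1+u)\log(1+u)-u-u^2/4$. Then $f(0)=0$ and $f'(u)=\log(1+u)-u/2$. Using $\log(1+u)\geq u-u^2/2$ and $u\leq1$, we get $\log(1+u)\geq u-u^2/2\geq u/2$, so $f'(u)\geq0$ on $[0,1]$. Therefore $f\geq0$ on $[0,1]$, and
\[
\P\bigl(X>(1+u)M\bigr)\leq \exp\left(-\frac{u^2m}{4}\right)=\exp\left(-\frac{u^2M}{4R}\right).
\]
There is no real obstacle here. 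The only points needing care are the convexity bound for variables in $[0,1]$ and the monotonicity in the mean that lets us replace $\E X$ by the upper bound $M$.
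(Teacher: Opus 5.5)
Your proof is correct and complete. The chord bound $e^{\lambda y}\leq 1+(e^\lambda-1)y$ on $[0,1]$, the product over the independent summands, Markov's inequality with $\lambda=\log(1+u)$, and the check that $(1+u)\log(1+u)-u\geq u^2/4$ on $[0,1]$ all hold. Using $e^\lambda-1>0$ correctly handles the hypothesis $\E X\leq M$.

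The paper gives no proof of this lemma and quotes it as a standard concentration tool. Your multiplicative Chernoff derivation is the standard justification behind that citation. Despite the name, the argument uses no variance information; only the bound $R$ on the summands enters.
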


\begin{lemma}[{Lov\'asz Local Lemma \cites{EL}{SpencerLLL}[Corollary 5.1.2]{AS}}]\label{lem:lll}
Let $\mathcal A$ be a finite family of events.  Suppose every event in $\mathcal A$ has probability at most $p$ and is mutually independent of all but at most $D$ other events.  If $ep(D+1)\leq1$, then with positive probability no event in $\mathcal A$ occurs.
\end{lemma}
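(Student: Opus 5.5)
This is the symmetric form of the local lemma, and I would prove it through the standard conditional-probability induction rather than the general asymmetric version. Write $\mathcal A=\{A_1,\ldots,A_n\}$, and for each $i$ let $\Gamma(i)$ be a set of at most $D$ indices such that $A_i$ is mutually independent of $\{A_j:j\notin\Gamma(i)\cup\{i\}\}$. If $D=0$, all events are mutually independent. Then $\P(\bigcap_i\overline{A_i})=\prod_i(1-\P(A_i))\geq(1-1/e)^n>0$, since $ep\leq1$ gives $p\leq 1/e<1$. So assume $D\geq1$ and set $x=1/(D+1)$.

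The core claim is the following: for every index $i$ and every $S\subseteq[n]\setminus\{i\}$ with $\P(\bigcap_{j\in S}\overline{A_j})>0$,
\[
        \P\Bigl(A_i\,\Bigm|\,\bigcap_{j\in S}\overline{A_j}\Bigr)\leq x .
\]
I would prove this by induction on $|S|$. The case $S=\emptyset$ holds because $p\leq 1/(e(D+1))\leq x$. For general $S$, split $S=S_1\cup S_2$ with $S_1=S\cap\Gamma(i)$ and $S_2=S\setminus\Gamma(i)$. If $S_1=\emptyset$, mutual independence gives the conditional probability equal to $\P(A_i)\leq p\leq x$. Otherwise write the conditional probability as a ratio:
\[
        \P\Bigl(A_i\Bigm|\bigcap_{S}\overline{A_j}\Bigr)
        =\frac{\P\bigl(A_i\cap\bigcap_{S_1}\overline{A_j}\bigm|\bigcap_{S_2}\overline{A_j}\bigr)}
               {\P\bigl(\bigcap_{S_1}\overline{A_j}\bigm|\bigcap_{S_2}\overline{A_j}\bigr)} .
\]
The numerator is at most $\P(A_i\mid\bigcap_{S_2}\overline{A_j})=\P(A_i)\leq p$, again by mutual independence from the events indexed by $S_2$.

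The denominator is where the induction is used, and it is the only delicate step. List $S_1=\{j_1,\ldots,j_k\}$ with $k\leq D$ and expand the denominator by the chain rule as
\[
        \prod_{t=1}^k\Bigl(1-\P\bigl(A_{j_t}\bigm|\textstyle\bigcap_{S_2\cup\{j_1,\ldots,j_{t-1}\}}\overline{A_j}\bigr)\Bigr).
\]
Each conditioning set has size less than $|S|$ and does not contain $j_t$, so by the induction hypothesis each factor is at least $1-x$. The denominator is therefore at least $(1-x)^D=(1-1/(D+1))^D\geq 1/e$. Combining the two bounds gives the conditional probability at most $ep\leq 1/(D+1)=x$, which closes the induction. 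I must also keep track of positivity of the conditioning events throughout. This follows inside the same induction: $\P(\bigcap_{T}\overline{A_j})\geq(1-x)^{|T|}>0$ for all $T$, established alongside the claim by the same chain-rule expansion.

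Finally, applying the chain rule to the whole family gives
\[
        \P\Bigl(\bigcap_{i=1}^n\overline{A_i}\Bigr)=\prod_{i=1}^n\Bigl(1-\P\bigl(A_i\bigm|\textstyle\bigcap_{j<i}\overline{A_j}\bigr)\Bigr)\geq(1-x)^n>0 ,
\]
so with positive probability no event in $\mathcal A$ occurs. The main point to check is the denominator estimate, specifically that the inequality $(1-1/(D+1))^D\geq 1/e$ exactly matches the hypothesis $ep(D+1)\leq1$.
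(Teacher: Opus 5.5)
Your proof is correct. The paper gives no proof of its own and only cites Alon--Spencer, Corollary~5.1.2, which is derived in exactly this way: the general Erd\H{o}s--Lov\'asz conditional-probability induction with $x_i=1/(D+1)$, a separate trivial case $D=0$, and $(1-1/(D+1))^D\geq 1/e$. You have simply inlined that induction in the symmetric setting, and your positivity bookkeeping is fine; positivity inside the denominator step is in fact automatic, since every conditioning event there contains $\bigcap_{j\in S}\overline{A_j}$.
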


\subsection{Active constraints and compatibility}

A partial list-coloring state is a tuple
\[
        Q=(U,L,\phi),
\]
where $U\subseteq V$ is the set of uncolored vertices, $L(v)$ is the current list of an uncolored vertex $v$, and $\phi$ is a proper coloring of $V\setminus U$.  For an original edge $e\in E$, put
\[
        R_Q(e)\defeq e\cap U .
\]

A color $c$ is called \emphd{active for $e$ in $Q$} if
\begin{enumerate}[label=\textup{(A\arabic*)}]
    \item $|R_Q(e)|\geq2$;
    \item $c\in L(u)$ for every $u\in R_Q(e)$;
    \item every already colored vertex of $e$, if any, has color $c$ under $\phi$.
\end{enumerate}
In this case, we call $(e, c)$ an \emphd{active pair}. 
If $e\setminus U=\varnothing$, then condition \textup{(A3)} is vacuous.  If the already colored vertices of $e$ contain two different colors, then no color is active for $e$.

For $1\leq \ell\leq r$, $v\in U$, and $c\in L(v)$, define
\begin{align*}
    E_{\ell,Q}(v,c)
    &\defeq
    \{e\in E:
      v\in R_Q(e),\ |R_Q(e)|=\ell+1,
      c\text{ is active for }e\text{ in }Q\}, \\
    d_{\ell,Q}(v,c)&\defeq |E_{\ell,Q}(v,c)|, \\
    D_{\ell,Q}&\defeq \max_{v\in U,\ c\in L(v)} d_{\ell,Q}(v,c).
\end{align*}
The number $D_{\ell,Q}$ is the maximum $\ell$-degree of the active constraints.  The residual set itself has size $\ell+1$; the index $\ell$ counts the number of other vertices in such a set once a vertex $v$ is fixed.

\begin{defn}[Compatibility]\label{def:compatibility}
A coloring $\psi$ of $U$ from the lists is said to \emphd{avoid the active constraints} if there is no active pair $(e,c)$ such that
\[
        \psi(u)=c\qquad\text{for every }u\in R_Q(e).
\]
A partial state $Q=(U,L,\phi)$ is called \emph{compatible} if $\phi$ is proper on $V\setminus U$ and the following local deletion condition holds: whenever all already colored vertices of an original edge $e$, if any, have one common color $c$ and $R_Q(e)=\{u\}$, we have $c\notin L(u)$.
\end{defn}

The reason for this definition is simple but important.  If $Q$ is compatible, then every coloring of $U$ from the lists that avoids all active constraints extends $\phi$ to a proper coloring of $G$.  Indeed, if an original edge $e$ became monochromatic of color $c$, then there are three possibilities.  If $|R_Q(e)|=0$, this contradicts that $\phi$ is already proper.  If $|R_Q(e)|=1$, the local deletion condition says that the last vertex does not have color $c$ available.  If $|R_Q(e)|\geq2$, then $(e,c)$ is an active constraint and has been violated.  The initial state $Q_0=(V,L_0,\emptyset)$ is compatible.

\subsection{Separation from uncrowdedness}

The only geometric input is the following separation statement.  It is the point at which the exclusion of cycles of lengths $2$, $3$, and $4$ is used.

\begin{fact}[Separation in the original hypergraph]\label{fact:separation}
Let $e\in E$.
\begin{enumerate}[label=\textup{(S\arabic*)}]
    \item If $w\in e$ and $f\in E\setminus\{e\}$ contains $w$, then $f\cap e=\{w\}$.
    \item If $w,w'\in e$ are distinct, $f\in E\setminus\{e\}$ contains $w$, and $f'\in E\setminus\{e\}$ contains $w'$, then $f\cap f'=\varnothing$.
    \item If $e,e'\in E$ are distinct and $e\cap e'=\{v\}$, and if $w\in e\setminus\{v\}$ and $w'\in e'\setminus\{v\}$, then every edge $f\neq e$ containing $w$ is disjoint from $e'$, every edge $f'\neq e'$ containing $w'$ is disjoint from $e$, and such $f$ and $f'$ are disjoint from each other.
\end{enumerate}
\end{fact}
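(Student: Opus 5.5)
Each part of Fact~\ref{fact:separation} will be proved by contradiction: if the conclusion fails, we exhibit a Berge cycle of length $2$, $3$ or $4$, which uncrowdedness forbids. The point to watch throughout is that the cycle is \emph{genuine}, meaning its vertices are pairwise distinct and its edges are pairwise distinct.

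\textbf{Part (S1).} Suppose $f\cap e$ contains some $x\neq w$. Then $w,x$ together with the distinct edges $e,f$ form a Berge $2$-cycle, since $\{w,x\}\subseteq e$ and $\{x,w\}\subseteq f$. This contradicts girth at least $5$.

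\textbf{Part (S2).} Suppose $x\in f\cap f'$.
\begin{itemize}
\item If $f=f'$, then $w,w'\in f\cap e$, so $|f\cap e|\geq 2$, contradicting (S1).
\item So assume $f\neq f'$. By (S1), $f\cap e=\{w\}$ and $f'\cap e=\{w'\}$, so $x\notin e$. In particular $x$, $w$, $w'$ are distinct.
\item The sequence $w,w',x$ with edges $e,f',f$ is a Berge $3$-cycle: $\{w,w'\}\subseteq e$, $\{w',x\}\subseteq f'$, $\{x,w\}\subseteq f$. The three edges are distinct, a contradiction.
\end{itemize}

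\textbf{Part (S3).} We use the same method, with $4$-cycles replacing $3$-cycles.

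\emph{First claim: an edge $f\neq e$ containing $w$ is disjoint from $e'$.}
\begin{itemize}
\item If $f=e'$, then $w,v\in e\cap e'$, contradicting $e\cap e'=\{v\}$.
\item So $f\neq e'$. Suppose $y\in f\cap e'$. By (S1), $f\cap e=\{w\}$, so $y\neq v$ (as $v\in e$ and $v\neq w$), and $y\neq w$ (as $w\notin e'$).
\item Then $v,w,y$ with edges $e,f,e'$ form a Berge $3$-cycle, a contradiction.
\end{itemize}
The statement for $f'$ follows by symmetry.

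\emph{Second claim: such $f$ and $f'$ are disjoint.}
\begin{itemize}
\item $f\neq f'$, since $f$ misses $e'$ but $f'$ contains $w'\in e'$.
\item Suppose $z\in f\cap f'$. Then $z\notin e\cup e'$, by the first claim and by (S1) applied to $f'$ and $e'$.
\item The vertices $v,w,z,w'$ are distinct, and the edges $e,f,f',e'$ are distinct: $f,f'\notin\{e,e'\}$ by the first claim. These give a Berge $4$-cycle $v\in e\ni w\in f\ni z\in f'\ni w'\in e'\ni v$, a contradiction.
\end{itemize}

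The only delicate step is this distinctness bookkeeping, since a degenerate cycle with repeated vertices or edges would not be forbidden. Each repetition is excluded in turn using the earlier parts of the fact.
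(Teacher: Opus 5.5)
Your proof is correct and follows the same route as the paper's. Each part is reduced to a forbidden Berge $2$-, $3$-, or $4$-cycle, and the degenerate coincidences are excluded using (S1) and the earlier claims. You also spell out the distinctness checks on vertices and edges, which the paper compresses into the remark that repeated-edge coincidences only shorten the cycle or contradict linearity.
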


\begin{proof}
Part \textup{(S1)} is exactly linearity, since two different edges sharing two vertices would form a $2$-cycle.  For \textup{(S2)}, if $f$ and $f'$ had a common vertex outside $e$, then the edges $e,f,f'$ would form a $3$-cycle.  If $f=f'$, then $e$ and $f$ would share both $w$ and $w'$, contradicting \textup{(S1)}.  For \textup{(S3)}, if $f$ met $e'$ away from $v$, then $e,f,e'$ would form a $3$-cycle.  If $f$ and $f'$ met, then $e,f,f',e'$ would form a $4$-cycle, with any repeated-edge coincidence only shortening the forbidden cycle or contradicting linearity.
\end{proof}

\section{The one-step nibble}\label{sec:onestep}

The constants $C_0,C_1,K$ and the small parameter $\rho$ are fixed in \Cref{sec:constants}.

This section proves the deterministic statement that drives the iteration.  The input is a compatible state whose active degrees satisfy a binomial upper-bound.  The output is a new compatible state with the same type of upper-bound and with the ratio $x=d^{1/r}/s$ decreased by almost $h$.

\begin{lemma}[One-step active-constraint nibble]\label{lem:onestep}
Let $Q=(U,L,\phi)$ be a compatible state in an uncrowded $(r+1)$-uniform hypergraph $G$.  Assume all current lists have the same integer size $s$.  Let $d>0$, $B\geq0$, and $h>0$, and assume the degree upper-bound
\begin{equation}\label{eq:onestep-upper-bound}
        D_{\ell,Q}\leq \binom r\ell B^{r-\ell}d^{\ell/r}
        \qquad(1\leq \ell\leq r).
\end{equation}
Put
\begin{equation}\label{eq:onestep-basic-parameters}
        x=\frac{d^{1/r}}s,
        \qquad
        \theta=\frac hx,
        \qquad
        p=hd^{-1/r}=\frac{\theta}{s},
\end{equation}
\begin{equation}\label{eq:onestep-tau-gamma}
        \tau=\sum_{\ell=1}^rD_{\ell,Q}p^\ell,
        \qquad
        \gamma=\prod_{\ell=1}^r(1-p^\ell)^{D_{\ell,Q}},
\end{equation}
\begin{equation}\label{eq:onestep-sigma-omega}
        \sigma=1-\theta+C_0(\theta\tau+p),
        \qquad
        \omega=C_0(\zeta+p),
\end{equation}
where the constants are fixed in \Cref{sec:constants}.  Define
\begin{equation}\label{eq:onestep-updates}
        s^+=\lfloor(1-\zeta)\gamma s\rfloor,
        \qquad
        B^+=B+h,
        \qquad
        d^+=((1+\omega)\sigma\gamma)^r d.
\end{equation}
For $1\leq \ell\leq r$, set
\begin{equation}\label{eq:mu-onestep}
        \mu_\ell=\binom r\ell(B+h)^{r-\ell}\bigl(d(\sigma\gamma)^r\bigr)^{\ell/r},
        \qquad
        Q_*= (s+d+B+3)^K.
\end{equation}
Assume
\begin{equation}\label{eq:onestep-smallness}
        \theta\leq\rho,
        \qquad
        hB^{r-1}\leq\rho,
        \qquad
        h^r\leq\rho,
        \qquad
        s\geq \max\{100C_0,2\zeta^{-1},3\},
\end{equation}
\begin{equation}\label{eq:onestep-p-zeta}
        0<\zeta\leq\frac1{200C_0r},
        \qquad
        p\leq\min\left\{\frac1{10r},\frac{\zeta}{100C_0}\right\},
\end{equation}
\begin{equation}\label{eq:onestep-scales}
        s\geq K\zeta^{-2}\log Q_* ,
        \qquad
        \mu_\ell\geq K\omega^{-2}\log Q_*\quad(1\leq\ell\leq r).
\end{equation}
Then there is an outcome of the random step described below producing a compatible state $Q^+=(U^+,L^+,\phi^+)$ such that all lists in $Q^+$ have size $s^+$ and
\begin{equation}\label{eq:onestep-new-upper-bound}
        D_{\ell,Q^+}\leq \binom r\ell (B^+)^{r-\ell}(d^+)^{\ell/r}
        \qquad(1\leq\ell\leq r).
\end{equation}
Moreover, for $x^+=(d^+)^{1/r}/s^+$ we have
\begin{equation}\label{eq:onestep-x}
        x-h\leq x^+\leq x-h+C_1(h\tau+\zeta x+px).
\end{equation}
\end{lemma}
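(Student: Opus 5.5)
We carry out the random step described in the overview and then fix one good outcome via \Cref{lem:lll}. Each $v\in U$ is activated independently with probability $\theta$ and, if activated, picks a tentative color $Y_v$ uniformly from $L(v)$. Thus $\Pr(Y_v=c)=p$ for every $c\in L(v)$. Let $\Safe(v,c)$ be the event that no $e\in E_{\ell,Q}(v,c)$ has $Y_u=c$ for all $u\in R_Q(e)\setminus\{v\}$. By \textup{(S1)} the residual sets of distinct active pairs through $(v,c)$ meet only in $v$, so
\[
\Pr(\Safe(v,c))=\prod_\ell(1-p^\ell)^{d_{\ell,Q}(v,c)}\geq\gamma .
\]
We add independent equalizing coins $\xi_{v,c}$ with success probability $\gamma/\Pr(\Safe(v,c))$. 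A vertex $v$ with $Y_v=c$ safe becomes permanently colored $c$. Every remaining uncolored $v$ keeps $c$ in its temporary list exactly when $\Safe(v,c)$ and $\xi_{v,c}$ both hold, and when additionally deleting $c$ is not forced. Deletion is forced when some edge $e\ni v$ now has all its other vertices colored $c$, and we delete these colors to restore the local deletion condition. The forced deletions are contained in the unsafe events, up to $O(p)$ corrections, so after equalization the survival probability of each color is $\gamma$ up to a factor $1-O(p)$.

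Compatibility of $Q^+$ follows from three observations. Colors are only assigned when safe, so no active constraint becomes fully colored. Two newly colored vertices in a common edge with a previously active color either break activity or are prevented by safety. Forced deletions restore the deletion condition. We then truncate each list to size exactly $s^+$, which only decreases active degrees.

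For the list sizes, $\E|L'(v)|\geq(1-O(p))\gamma s$. We verify conditions \textup{(T1)}–\textup{(T2)} of \Cref{thm:talagrand}, with $\mu,\rho=O(1)$. We apply the inequality to the number of deleted colors, a certificate for a deletion being the activation and choice trials of the witnessing residual vertices together with the coin. This gives $|L'(v)|\geq(1-\zeta)\gamma s$ except with probability $Q_*^{-10}$, using $s\geq K\zeta^{-2}\log Q_*$ and $p\leq\zeta/(100C_0)$.

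For the degrees, fix $(v,c)$ and an old active pair $(e,c)$ with $|R_Q(e)|=m+1$. For $(e,c)$ to contribute to $d_{\ell,Q^+}(v,c)$, exactly $m-\ell$ of the other residual vertices must be colored $c$, which costs at most $p^{m-\ell}$. The remaining $\ell$ vertices must stay uncolored and keep $c$. \textup{(S2)} and \textup{(S3)} imply that the external witnesses of different vertices $w\in R_Q(e)$ are disjoint. Each such $w$ therefore independently survives uncolored with $c$ kept with probability at most $\sigma\gamma$:
\begin{itemize}
\item non-activation contributes $1-\theta$;
\item activation blocked by an external unsafe event contributes at most $C_0\theta\tau$;
\item blocking through $e$ itself contributes $O(p)$.
\end{itemize}
Summing over $m$ and using the Vandermonde-type identity from the overview gives $\E d_{\ell,Q^+}(v,c)\leq\mu_\ell$. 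The contributions of distinct $e$ are not independent, but \textup{(S3)} makes them independent after conditioning on the choices at $v$, or after exposing them as a sum of $\{0,1\}$ variables over disjoint trial sets. \Cref{lem:bernstein} with $u=\omega/2$ and $R=1$ then gives $d_{\ell,Q^+}\leq(1+\omega)^{\ell}\mu_\ell$ with failure probability at most $Q_*^{-10}$, using $\mu_\ell\geq K\omega^{-2}\log Q_*$. Since $(1+\omega)^\ell\mu_\ell\leq\binom r\ell(B^+)^{r-\ell}(d^+)^{\ell/r}$ by the definition of $d^+$, this is \eqref{eq:onestep-new-upper-bound}.

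Each bad event depends on trials within distance $O(1)$ of $v$ in $G$. Its dependency degree is therefore polynomial in $s+d+B$, hence at most $Q_*^{1/2}$, and \Cref{lem:lll} applies.

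Finally, $x^+=(1+\omega)\sigma\gamma d^{1/r}/s^+$ and $s^+\in[(1-\zeta)\gamma s-1,(1-\zeta)\gamma s]$. Hence
\[
x^+=\sigma x\,(1+O(\omega+\zeta+1/s))=\sigma x+O((\zeta+p)x).
\]
Here $\sigma x=x-h+C_0(h\tau+px)$ because $\theta x=h$. This gives the upper bound in \eqref{eq:onestep-x} with $C_1=20C_0$. The lower bound $x^+\geq x-h$ holds because $\sigma\geq1-\theta$, $(1+\omega)/(1-\zeta)\geq1$ and $s^+\leq\gamma s$.

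The main obstacle is the per-vertex survival bound $\sigma\gamma$ in the degree step. It requires conditioning on the fate of $e$'s own vertices while keeping the external unsafe events and the equalizing coins independent across the $\ell$ survivors, and this is exactly where \textup{(S2)}–\textup{(S3)} must be applied carefully.
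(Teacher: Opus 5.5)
Your outline coincides with the paper's proof. It uses the same activation/tentative-color step with equalizing coins calibrated against $\P(\Safe(v,c))$, Talagrand for $|\widehat L(v)|$, and a per-edge bound $p^{m-\ell}(\sigma\gamma)^\ell$ summed via $\binom rm\binom m\ell=\binom r\ell\binom{r-\ell}{m-\ell}$. It then applies Bernstein conditionally on $Y_v$ using (S3), the local lemma, and the same computation of $x^+$. The gap is exactly at the step you flag as the main obstacle, and the reason you give for it is false. You argue that the external witnesses of distinct survivors $w\in S$ are disjoint by (S2), so each $w$ \emph{independently} stays uncolored and keeps $c$ with probability at most $\sigma\gamma$, with ``blocking through $e$ itself'' as an $O(p)$ per-vertex term. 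But $e$ blocks $w$ with color $b$ precisely when every vertex of $R_Q(e)$ has tentative color $b$ and $b$ is active for $e$. This includes $v$ and the other survivors. It is therefore a single event common to all $w\in S$, which (S2) does not touch, so the survival events are not independent. Conditioning on all internal coordinates of $e$ does give conditional independence. You then still have to average a product of conditional probabilities that depend jointly on those coordinates, and multiplying per-vertex averages is not justified.

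The paper's device is to split this event off. Let $B_e$ be the event that $e$ blocks the tentative color of some $w\in S$. Given the other internal values, at most one color can do this, so $\P(B_e)\le|S|p$. Off $B_e$ we have $J_w\subseteq A_w=\{\xi_{w,c}=1,\ \Safe_{\ne e}(w,c),\ Y_w=\bot\text{ or }\Bad_{\ne e}(w,Y_w)\}$, and these events are independent across $w$ by (S1)--(S2). In all cases $J_w\subseteq C_w=\{\xi_{w,c}=1,\ \Safe_{\ne e}(w,c)\}$, and the $C_w$ are independent of $B_e$. Hence $\P(\bigcap_wJ_w)\le\prod_w\P(A_w)+\P(B_e)\prod_w\P(C_w)\le\gamma^\ell\bigl[b^\ell+2r^2p\bigr]$ with $b=1-\theta+2p+2\theta\tau$. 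The $2p$ arises because the coin was calibrated against the unconditional $\P(\Safe(w,c))$, which already contains $e$'s own factor; your non-activation bullet ($1-\theta$) omits this. The within-edge term thus enters additively, once per edge. It is absorbed by $\sigma^\ell-b^\ell\ge\ell b^{\ell-1}(C_0-2)(\theta\tau+p)\ge2r^2p$, which is why $\sigma$ carries $C_0p$ rather than $C_0\theta p$. The bound must also hold for each fixed value of $Y_v$, since your Bernstein step is conditional on $Y_v$.

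Two minor points. First, your forced deletions are vacuous, with no $O(p)$ correction. If $c\in L(u)$, $R_{Q^+}(e)=\{u\}$, and all colored vertices of $e$ have color $c$, then $e$ was active for $c$ in $Q$ and $\Safe(u,c)$ already failed. So $\widehat L$ itself gives a compatible state. Second, the local-lemma dependencies should be measured in the graph of active constraints, not in $G$. Nothing in the lemma bounds $\Delta(G)$ in terms of $s+d+B$.
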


The rest of this section is the proof of Lemma~\ref{lem:onestep}.

\subsection{One random coloring step}

The random step used in Lemma~\ref{lem:onestep} is the following.

\begin{breakablealgorithm}
\caption{One-step active-constraint nibble}\label{alg:onestep}
\begin{enumerate}[leftmargin=2.2em]
    \item For every $v\in U$, independently set $Y_v=\bot$ with probability $1-\theta$.  Otherwise choose $Y_v$ uniformly from $L(v)$.  Thus, for every fixed color $c\in L(v)$,
    \begin{equation}\label{eq:tentative-probability}
        \P(Y_v=c)=p.
    \end{equation}

    \item For $c\in L(v)$, say that $c$ is \emph{safe at $v$}, and write $\Safe(v,c)$, if there is no active original edge $e\in E_{\ell,Q}(v,c)$, for any $\ell\in[r]$, such that
    \[
        Y_u=c\qquad\text{for every }u\in R_Q(e)\setminus\{v\}.
    \]
    If $Y_v=c$ and $\Safe(v,c)$ holds, permanently color $v$ with $c$.  Let $U^+$ be the set of vertices not accepted in this step, and let $\phi^+$ be the extended partial coloring.

    \item For every pair $(v,c)$ with $v\in U$ and $c\in L(v)$, toss an independent equalizing coin $\xi_{v,c}$.  Its success probability will be chosen so that each color has the same marginal chance $\gamma$ of staying in the temporary list.  Define
    \[
        \widehat L(v)=\{c\in L(v):\ \Safe(v,c)\text{ holds and }\xi_{v,c}=1\}.
    \]

    \item If some unaccepted vertex $v\in U^+$ satisfies $|\widehat L(v)|<s^+$, declare failure.  Otherwise choose an arbitrary subset
    \[
        L^+(v)\subseteq\widehat L(v),\qquad |L^+(v)|=s^+,
    \]
    for every $v\in U^+$.
\end{enumerate}
\end{breakablealgorithm}

\subsection{Safe colors and equalization}

Fix $v\in U$ and $c\in L(v)$.  Since $G$ is linear, the sets $R_Q(e)\setminus\{v\}$ over all active edges $e\in E_{\ell,Q}(v,c)$ are pairwise disjoint.  Hence
\begin{equation}\label{eq:safe-product}
        \P(\Safe(v,c))
        =\prod_{\ell=1}^r(1-p^\ell)^{d_{\ell,Q}(v,c)}
        \geq
        \prod_{\ell=1}^r(1-p^\ell)^{D_{\ell,Q}}
        =\gamma.
\end{equation}
We may therefore set
\begin{equation}\label{eq:equalizing-coin}
        \P(\xi_{v,c}=1)=\frac{\gamma}{\P(\Safe(v,c))}.
\end{equation}
Then, for every fixed pair $(v,c)$,
\begin{equation}\label{eq:equalized-keep}
        \P(c\in\widehat L(v))=\gamma.
\end{equation}

The upper-bound also gives a convenient upper bound for $\tau$.  Since $p=hd^{-1/r}$,
\begin{equation}\label{eq:tau-upper-bound}
\begin{aligned}
        \tau
        &=\sum_{\ell=1}^rD_{\ell,Q}p^\ell  \\
        &\leq
        \sum_{\ell=1}^r\binom r\ell B^{r-\ell}d^{\ell/r}
        \left(hd^{-1/r}\right)^\ell
        =(B+h)^r-B^r.
\end{aligned}
\end{equation}
By \eqref{eq:onestep-smallness}, \eqref{eq:tau-upper-bound}, and the choice of $\rho$ in \Cref{sec:constants}, all estimates below take place in the regime
\begin{equation}\label{eq:small-regime}
        \theta\leq\frac1{100r},\quad
        p\leq\frac1{10r},\quad
        \tau\leq\frac1{100C_0},\quad
        \gamma\geq\frac12,
        \quad
        0<\sigma<1,
        \quad
        0<\omega\leq\frac1{10r}.
\end{equation}
The lower bound $\gamma\geq1/2$ follows from $\log(1-z)\geq -z-z^2$ and $\sum_\ell D_{\ell,Q}p^{2\ell}\leq p\tau$.  In fact, the same inequalities give the stronger crude estimate $\sigma\gamma\geq1/2$: indeed $\sigma\geq1-\theta$ and $\gamma\geq\exp(-(1+p)\tau)$, while $\theta$ and $\tau$ are both much smaller than one.

\subsection{List-size concentration}

\begin{claim}[List-size concentration]\label{claim:list-concentration}
For each $v\in U$, let
\[
        \mathcal A_v=\{|\widehat L(v)|<(1-\zeta)\gamma s\}.
\]
Under the hypotheses of Lemma~\ref{lem:onestep},
\[
        \P(\mathcal A_v)\leq Q_*^{-5}.
\]
\end{claim}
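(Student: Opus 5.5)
We bound the lower tail of $X=|\widehat L(v)|$ by applying Talagrand's inequality (\Cref{thm:talagrand}) to a slightly modified count. By \eqref{eq:equalized-keep} and linearity, $\E X=\gamma s$, and \eqref{eq:small-regime} gives $\E X\geq s/2$. The event $\mathcal A_v$ is then the event $X<\E X-\zeta\gamma s$.

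\textbf{Setting up the trials.} The independent trials are the choices $Y_u$ for $u\in U$ together with the coins $\xi_{v,c}$ for $c\in L(v)$. The variable $X$ depends only on the coins at $v$ and on the $Y_u$ with $u$ a neighbor of $v$ through an active edge. Talagrand's inequality is awkward for $X$ directly, because certifying that $c$ survives requires certifying that no witness kills it. We therefore write
\[
X=Z-W,\qquad Z=\#\{c\in L(v):\xi_{v,c}=1\},\qquad W=\#\{c\in L(v):\xi_{v,c}=1,\ \neg\Safe(v,c)\}.
\]
$Z$ is a sum of independent indicators, so a Chernoff bound (or \Cref{lem:bernstein} applied to $s-Z$) controls its lower tail. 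It then suffices to show that $W$ is not much larger than its mean.

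\textbf{Verifying (T1) and (T2) for $W$.} For (T2), each colour $c$ counted by $W$ is certified by the coin $\xi_{v,c}$ together with the at most $r$ trials $Y_u$, $u\in R_Q(e)\setminus\{v\}$, of one active edge $e$ witnessing that $c$ is unsafe. This gives $\rho_{\mathrm T}\leq r+1$. For (T1), changing one coin changes $W$ by at most $1$. Changing a single $Y_u$ can only affect colours $c$ for which $u$ lies in an active edge through $(v,c)$ with $Y_u=c$ before or after the change. Since $G$ is linear by (S1), $u$ and $v$ lie in at most one common edge $e$, so only the old and new values of $Y_u$ can be affected, and $W$ changes by at most $2$. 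Hence $\mu\leq 2$. The degenerate case $W\equiv 0$ needs no argument.

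\textbf{Applying the bounds.} The mean satisfies $\E W\leq \tau s$. Applying \Cref{thm:talagrand} with $t=\zeta\gamma s/4$, the correction terms $20\mu\sqrt{\rho_{\mathrm T}\E W}+64\mu^2\rho_{\mathrm T}=O(\sqrt{s}+1)$ are at most $\zeta\gamma s/8$, using $s\geq K\zeta^{-2}\log Q_*$ and the size of $K$. The tail probability is then
\[
\exp\bigl(-\Omega(\zeta^2 s)\bigr)\leq Q_*^{-6},
\]
since $\E W+t=O(s)$. The Chernoff bound on $Z$, with deviation $\zeta\gamma s/2$, likewise gives probability at most $\exp(-\Omega(\zeta^2 s))\leq Q_*^{-6}$. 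Outside both bad events,
\[
X\geq \E Z-\E W-\tfrac34\zeta\gamma s=\gamma s-\tfrac34\zeta\gamma s,
\]
because $\E Z-\E W=\E X=\gamma s$. A union bound then gives $\P(\mathcal A_v)\leq 2Q_*^{-6}\leq Q_*^{-5}$.

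\textbf{Main obstacle.} The delicate step is the Lipschitz condition (T1). It relies on linearity to ensure that one tentative choice $Y_u$ influences at most two colours at $v$, and on the splitting $X=Z-W$, which avoids certifying the non-existence of witnesses. If the constant $\mu$ turned out larger, the argument would survive, since $K$ is large enough to absorb any bounded $\mu$ and $\rho_{\mathrm T}$.
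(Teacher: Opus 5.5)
Your proof is correct and is essentially the paper's argument. The paper applies Talagrand's inequality with $\mu=2$, $\rho=r+1$ directly to the number of lost colors $s-|\widehat L(v)|=(s-Z)+W$, which is already certifiable: a lost color is witnessed either by one failed coin or by at most $r$ tentative colors on an unsafe active edge. So splitting off $Z$ and handling it by a Chernoff bound is harmless but unnecessary.

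The only slip is cosmetic. Once the Talagrand correction terms are included, your final deficit is $\tfrac78\zeta\gamma s$ rather than $\tfrac34\zeta\gamma s$, which still lies below $\zeta\gamma s$.
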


\begin{proof}
Put $Z_v=s-|\widehat L(v)|$.  By \eqref{eq:equalized-keep}, $\E Z_v=(1-\gamma)s$.  Changing one tentative color $Y_u$ can change $Z_v$ by at most two: only the old and new tentative colors of $u$ can change their safety status at $v$, and for a fixed color there is at most one original edge containing both $u$ and $v$.  Changing one equalizing coin changes $Z_v$ by at most one.  Thus $Z_v$ is $2$-Lipschitz.

For every integer $q\geq1$, if $Z_v\geq q$, choose $q$ lost colors.  Each lost color is certified either by one failed coin or by at most $r$ tentative-color witnesses on an unsafe active edge through $v$.  Therefore the event $Z_v\geq q$ is certified by at most $(r+1)q$ trials.

Let $u_0=\zeta\gamma s/2$.  If $Z_v$ is identically zero, there is nothing to prove, so assume otherwise.  Since $\gamma\geq1/2$ and $\E Z_v\leq s$, the scale assumption $s\geq K\zeta^{-2}\log Q_*$ and the choice of $K$ imply
\[
        20\cdot2\sqrt{(r+1)\E Z_v}+64\cdot 2^2(r+1)\leq u_0
\]
and
\[
        \frac{u_0^2}{32(r+1)(\E Z_v+u_0)}
        \geq 6\log Q_*.
\]
Apply Theorem~\ref{thm:talagrand} with $X=Z_v$, $\mu=2$, $\rho=r+1$, and $t=u_0$. The event
\[
        Z_v>(1-\gamma+\zeta\gamma)s
        =\E Z_v+2u_0
\]
implies
\[
        |Z_v-\E Z_v|\geq u_0+20\cdot2\sqrt{(r+1)\E Z_v}+64\cdot2^2(r+1).
\]
Therefore
\[
        \P\left(Z_v>(1-\gamma+\zeta\gamma)s\right)
        \leq
        4\exp\left(-\frac{u_0^2}{32(r+1)(\E Z_v+u_0)}\right)
        \leq4Q_*^{-6}\leq Q_*^{-5}.
\]
This is exactly the claimed lower-tail estimate for $|\widehat L(v)|$.
\end{proof}

\subsection{One-edge survival estimate}\label{subsec:fixedsurvival}

The following estimate is the heart of the proof.  It says that, inside one active original edge, a prescribed set of $\ell$ surviving vertices behaves as if each vertex independently contributed a factor at most $\sigma\gamma$, up to the expected factor $p$ for each vertex that is accepted in color $c$.

Fix $v\in U$, $c\in L(v)$, an integer $m\in\{1,\ldots,r\}$, and an original edge
\[
        e\in E_{m,Q}(v,c).
\]
Thus $|R_Q(e)|=m+1$.  Let
\[
        S\subseteq R_Q(e)\setminus\{v\},\qquad |S|=\ell,
\]
and put
\[
        T=R_Q(e)\setminus(S\cup\{v\}).
\]
For $w\in S$, define
\[
        J_w=\{w\in U^+\text{ and }c\in\widehat L(w)\}.
\]

\begin{claim}[One-edge survival estimate]\label{claim:fixed-set}
For every value of $Y_v$ with positive probability,
\begin{equation}\label{eq:fixed-set}
\E\left[
        \1_{\{Y_u=c\text{ for all }u\in T\}}
        \prod_{w\in S}\1_{J_w}
        \ \middle|\ Y_v
\right]
\leq
        p^{|T|}(\sigma\gamma)^\ell .
\end{equation}
\end{claim}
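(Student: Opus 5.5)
The plan is to condition on all tentative colors inside the residual set $R_Q(e)$, show that the events $J_w$ ($w\in S$) become conditionally independent with explicit one-vertex bounds, and then average over $Y_w$ ($w\in S$) given $Y_v$. Throughout, write $Y_e\defeq(Y_u)_{u\in R_Q(e)}$.

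\emph{Step 1: independence given $Y_e$.} For $w\in S$, the event $J_w$ depends only on $Y_e$, on the coins $\xi_{w,\cdot}$, and on the variables $Y_u$ for $u\in R_Q(f)\setminus\{w\}$ with $f\ni w$, $f\neq e$.
\begin{itemize}
\item By (S1), each such $f$ meets $e$ only in $w$.
\item By (S2), the edges through distinct $w,w'\in S$ (other than $e$) are pairwise disjoint.
\end{itemize}
Hence, given $Y_e$, the events $J_w$, $w\in S$, depend on disjoint families of independent trials. Moreover the factor $\1_{\{Y_u=c,\ u\in T\}}$ is $Y_e$-measurable. So it suffices to bound $\P(J_w\mid Y_e)$ for each $w$ separately, and then take the expectation over $Y_S$ given $Y_v$, using that $Y_T=c$ is fixed.

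\emph{Step 2: one-vertex bound.} Split $\Safe(w,c)$ into the part coming from $e$ itself and the external part coming from the edges $f\neq e$. These two parts are independent, so
\[
\P(\Safe(w,c))=\P(\text{ext-safe})\,(1-p^m).
\]
Consider the three possible values of $Y_w$.
\begin{itemize}
\item If $Y_w=c$: $w$ is unaccepted only if $c$ is unsafe at $w$, and then $c\notin\widehat L(w)$. So $J_w$ fails.
\item If $Y_w=\bot$: by \eqref{eq:equalizing-coin},
\[
\P(J_w\mid Y_e)\leq \P(\text{ext-safe})\cdot\frac{\gamma}{\P(\Safe(w,c))}=\frac{\gamma}{1-p^m}\leq\gamma(1+2p^m).
\]
\item If $Y_w=b\notin\{\bot,c\}$: $w$ must be blocked in color $b$, either by $e$ or by an external witness edge $f$.
  \begin{itemize}
  \item A block by $e$ is $Y_e$-measurable; it contributes at most the indicator of that event times $\gamma(1+2p^m)$.
  \item For an external witness $f\in E_{j,Q}(w,b)$, condition on the colors on $R_Q(f)$. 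Since $f$ is then a witness for $b$, it is not a witness for $c$. The remaining $c$-safety events are independent of $Y_f$, so the coin factor again gives at most $\gamma(1+O(p))$. Summing over $f$ gives at most $\sum_j d_{j,Q}(w,b)p^j\gamma(1+O(p))\leq\tau\gamma(1+O(p))$.
  \end{itemize}
\end{itemize}
Altogether,
\[
\P(J_w\mid Y_e)\leq\gamma(1+2p)\Bigl[\1_{\{Y_w=\bot\}}+\1_{\{e\text{ blocks }w\text{ in }Y_w\}}+2\tau\,\1_{\{Y_w\notin\{\bot,c\}\}}\Bigr].
\]

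\emph{Step 3: averaging.} The main obstacle is the middle indicator, because it couples the vertices of $S$. The event ``$e$ blocks $w$ in $b=Y_w\neq c$'' requires every vertex of $R_Q(e)\setminus\{w\}$ to have tentative color $b$.
\begin{itemize}
\item If $T\neq\varnothing$, this contradicts $Y_T=c$, so the indicator vanishes.
\item If $T=\varnothing$, it forces $Y_w=Y_v\neq c$.
\end{itemize}
In both cases the indicator is at most $\1_{\{Y_w=Y_v\neq\bot\}}$, which depends only on $Y_w$ once $Y_v$ is fixed; this decouples the factors. Given $Y_v$, the variables $Y_w$ ($w\in S$) and $Y_T$ are independent, and
\[
\P(Y_w=\bot)=1-\theta,\qquad \P(Y_w=Y_v)\leq p,\qquad \P(Y_w\notin\{\bot,c\})\leq\theta.
\]
So each $w\in S$ contributes at most
\[
\gamma(1+2p)(1-\theta+p+2\theta\tau)\leq\sigma\gamma,
\]
using $C_0$ large and \eqref{eq:small-regime} to absorb $2p(1-\theta)$ and the cross terms into $C_0(\theta\tau+p)$. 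Finally $\P(Y_T=c\mid Y_v)=p^{|T|}$, since $v\notin T$. Multiplying the $\ell$ factors gives \eqref{eq:fixed-set}.
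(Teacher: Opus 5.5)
Your proof is correct. It shares the paper's basic ingredients: separation (S1)/(S2) for independence of the external coordinates, the equalizing coin giving a factor about $\gamma$, and a union bound over external witnesses giving $\theta\tau$. Where it differs is in handling the one genuine source of dependence, namely the edge $e$ itself blocking a vertex of $S$.

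\textbf{The paper's route.}
\begin{enumerate}
\item It conditions only on $Y_v$ and on $\{Y_T=c\}$.
\item It splits globally on the event $B_e$ that $e$ blocks some $w\in S$.
\item It uses $J_w\cap B_e^c\subseteq A_w$ and $J_w\subseteq C_w$, where the families $\{A_w\}$ and $\{C_w\}$ are each independent. This gives the bound $\gamma^\ell\bigl[(\tfrac{1-\theta}{1-p}+2\theta\tau)^\ell+r^2p(1-p)^{-\ell}\bigr]$.
\item It absorbs the additive $r^2p$ term through the comparison $\sigma^\ell-b^\ell\geq \ell b^{\ell-1}(\sigma-b)\geq 2r^2p$.
\end{enumerate}

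\textbf{Your route.}
\begin{enumerate}
\item You condition on all of $Y_e$, which makes the events $J_w$ exactly conditionally independent.
\item You bound each $\P(J_w\mid Y_e)$ pointwise, keeping an indicator for a block by $e$.
\item You observe that a block by $e$ in a colour $b\neq c$ cannot occur on $\{Y_T=c\}$ when $T\neq\varnothing$, and forces $Y_w=Y_v$ when $T=\varnothing$.
\item That indicator is then a function of $Y_w$ alone once $Y_v$ is fixed, so the product factorizes and each $w$ contributes at most $\gamma(1+2p)(1-\theta+p+2\theta\tau)\leq\sigma\gamma$.
\end{enumerate}

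\textbf{What each buys.} Your argument produces the factor $\sigma\gamma$ vertex by vertex, with no global comparison of $\ell$-th powers. Its intermediate estimates are also slightly sharper: the coin gives $\gamma/(1-p^m)$ rather than $\gamma/(1-p)$, and the case $Y_w=c$ contributes exactly $0$. The paper's argument avoids the fine conditioning on $Y_e$ and the case split on whether $T$ is empty. It pays with a cruder additive error, which the large constant $C_0$ in $\sigma$ is designed to absorb.
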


\begin{proof}
Condition on the fixed value of $Y_v$ and on the event $Y_u=c$ for every $u\in T$.  The latter event contributes the factor $p^{|T|}$.  Let $\P_*$ denote probability under this conditioning.  It remains to prove
\[
        \P_*\left(\bigcap_{w\in S}J_w\right)\leq(\sigma\gamma)^\ell.
\]

We first isolate the only dependence that can occur inside the original edge $e$.  Say that \emph{$e$ blocks $w$ with color $b$} if $Y_w=b$, the color $b$ is active for $e$, and
\[
        Y_u=b\qquad\text{for every }u\in R_Q(e)\setminus\{w\}.
\]
In that case, $b$ is unsafe at $w$ because of the edge $e$.  Let $B_e^w$ be the event that $w$ is kept uncolored because $e$ blocks its tentative color, and set $B_e=\bigcup_{w\in S}B_e^w$.

Fix an assignment $\mathbf y=(y_u)_{u\in R_Q(e)\setminus\{w\}}$ of tentative values to the variables $Y_u$, $u\in R_Q(e)\setminus\{w\}$, that is compatible with the conditioning already imposed.  Conditional on the event
\[
        Y_u=y_u\qquad (u\in R_Q(e)\setminus\{w\}),
\]
at most one color $b$ can make $e$ block $w$: it must be the common value of the fixed values $y_u$, if such a common value exists.  Hence, for every such assignment $\mathbf y$,
\[
        \P_*\bigl(B_e^w\mid Y_u=y_u\text{ for all }u\in R_Q(e)\setminus\{w\}\bigr)\leq p,
\]
because the event that $w$ is activated and chooses this one possible color is exactly the event $Y_w=b$, which has probability $p$.  Averaging over all assignments $\mathbf y$ and taking the union bound over $w\in S$ gives
\begin{equation}\label{eq:same-edge-exception}
        \P_*(B_e)\leq |S|p\leq rp\leq r^2p.
\end{equation}

For $w\in S$, let $\Safe_{\ne e}(w,c)$ be the event that no active original edge different from $e$ makes $c$ unsafe at $w$.  Similarly, let $\Bad_{\ne e}(w,b)$ be the event that some active original edge different from $e$ makes color $b$ unsafe at $w$.  On the complement of $B_e$, the event $J_w$ implies
\[
        A_w=\{\xi_{w,c}=1,\ \Safe_{\ne e}(w,c),\ Y_w=\bot\text{ or }\Bad_{\ne e}(w,Y_w)\}.
\]
Indeed, if $w$ is inactive, it only needs to keep color $c$; if it is activated and unaccepted, then some edge must make its tentative color unsafe, and outside $B_e$ that edge is different from $e$.

By Fact~\ref{fact:separation}, after the conditioning above the random coordinates defining the events $A_w$ for different $w\in S$ are pairwise disjoint.  Thus the events $A_w$ are mutually independent under $\P_*$.

Let $q_{w,c}=\P(\xi_{w,c}=1)$.  This number is defined using the unconditional safe probability in \eqref{eq:equalizing-coin}.  For color $c$, the contribution of the edge $e$ to the safety of $c$ at $w$ has probability at least $1-p$, because $e$ can make $c$ unsafe at $w$ only if all other residual vertices of $e$ tentatively choose $c$.  The coordinates used by $\Safe_{\ne e}(w,c)$ are disjoint from the conditioned coordinates in $e$, so its conditional probability is its unconditional probability.  Hence
\begin{equation}\label{eq:coin-external-safe}
        q_{w,c}\P_*(\Safe_{\ne e}(w,c))
        \leq \frac{\gamma}{1-p}.
\end{equation}
For every color $b\in L(w)$, the union bound and the definition of $\tau$ give
\begin{equation}\label{eq:external-bad}
        \P_*(\Bad_{\ne e}(w,b))\leq\tau.
\end{equation}
Using \eqref{eq:coin-external-safe}, \eqref{eq:external-bad}, $q_{w,c}\leq1$, and $\gamma\geq1/2$, we obtain
\begin{equation}\label{eq:Aw-bound}
\begin{aligned}
        \P_*(A_w)
        &\leq (1-\theta)\frac{\gamma}{1-p}
              +\sum_{b\in L(w)}p\,\P_*(\Bad_{\ne e}(w,b)) \\
        &\leq (1-\theta)\frac{\gamma}{1-p}+\theta\tau  \\
        &\leq
        \gamma\left(\frac{1-\theta}{1-p}+2\theta\tau\right).
\end{aligned}
\end{equation}
Also define
\[
        C_w=\{\xi_{w,c}=1,\ \Safe_{\ne e}(w,c)\}.
\]
The events $C_w$ are mutually independent and are independent of $B_e$, since they use only external coordinates and equalizing coins.  Moreover $J_w\subseteq C_w$ and $\P_*(C_w)\leq\gamma/(1-p)$.

Combining these estimates,
\begin{align*}
\P_*\left(\bigcap_{w\in S}J_w\right)
&\leq
\prod_{w\in S}\P_*(A_w)+\P_*(B_e)\prod_{w\in S}\P_*(C_w) \\
&\leq
\gamma^\ell\left[
        \left(\frac{1-\theta}{1-p}+2\theta\tau\right)^\ell
        +r^2p(1-p)^{-\ell}
\right].
\end{align*}
Put
\[
        b=1-\theta+2p+2\theta\tau.
\]
In the small regime \eqref{eq:small-regime},
\[
        \frac{1-\theta}{1-p}+2\theta\tau\leq b,
        \qquad
        (1-p)^{-\ell}\leq2,
        \qquad
        b\geq1-\frac1{100r}.
\]
Since
\[
        \sigma-b=(C_0-2)(\theta\tau+p),
\]
we have, for $1\leq\ell\leq r$,
\[
        \sigma^\ell-b^\ell
        \geq \ell b^{\ell-1}(\sigma-b)
        \geq \frac{C_0-2}{2}p
        \geq 2r^2p.
\]
Thus the bracket above is at most $\sigma^\ell$, which proves \eqref{eq:fixed-set}.
\end{proof}

\subsection{The new degree bounds}

For $v\in U$, $c\in L(v)$, and $\ell\in[r]$, define
\begin{equation}\label{eq:X-definition}
        X_\ell(v,c)
        =
        \sum_{m=\ell}^r
        \sum_{e\in E_{m,Q}(v,c)}
        \sum_{\substack{S\subseteq R_Q(e)\setminus\{v\}\\ |S|=\ell}}
        I(e,S),
\end{equation}
where $I(e,S)=1$ if
\begin{enumerate}[label=\textup{(I\arabic*)}]
    \item every $u\in R_Q(e)\setminus(S\cup\{v\})$ is accepted with color $c$ in this step;
    \item every $w\in S$ lies in $U^+$ and has $c\in\widehat L(w)$.
\end{enumerate}
If, after the step and final list trimming, the original edge $e$ is active for $c$ with residual size $\ell+1$ and contains $v$, then $e$ is counted by $X_\ell(v,c)$.  Therefore
\begin{equation}\label{eq:new-degree}
        d_{\ell,Q^+}(v,c)\leq X_\ell(v,c).
\end{equation}

\begin{claim}[Degree expectation and concentration]\label{claim:degree-concentration}
For every $v\in U$, $c\in L(v)$, $\ell\in[r]$, and every value of $Y_v$ with positive probability,
\[
        \E[X_\ell(v,c)\mid Y_v]\leq\mu_\ell.
\]
Moreover,
\[
        \P\left(X_\ell(v,c)>(1+\omega)^\ell\mu_\ell\right)
        \leq Q_*^{-5}.
\]
\end{claim}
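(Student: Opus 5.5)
The expectation bound follows from Claim~\ref{claim:fixed-set} and linearity of expectation. Fix $v,c,\ell$ and a value of $Y_v$. For each $m\geq\ell$, each $e\in E_{m,Q}(v,c)$ and each $S$ with $|S|=\ell$, condition (I1) implies $Y_u=c$ for every $u\in T$, where $|T|=m-\ell$. Condition (I2) is exactly $\bigcap_{w\in S}J_w$. So Claim~\ref{claim:fixed-set} gives
\[
\E[I(e,S)\mid Y_v]\leq p^{m-\ell}(\sigma\gamma)^\ell .
\]
There are $\binom m\ell$ choices of $S$ and at most $D_{m,Q}\leq\binom rm B^{r-m}d^{m/r}$ choices of $e$. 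With $p=hd^{-1/r}$, the binomial identity
\[
\sum_{m=\ell}^r\binom rm\binom m\ell B^{r-m}h^{m-\ell}=\binom r\ell(B+h)^{r-\ell}
\]
turns the sum into $\binom r\ell (B+h)^{r-\ell}d^{\ell/r}(\sigma\gamma)^\ell=\mu_\ell$.

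For concentration I will use Bernstein's inequality (Lemma~\ref{lem:bernstein}) conditionally on $Y_v$, grouping terms by edge. Set $X_e=\sum_S I(e,S)$, so $0\leq X_e\leq\binom r\ell\leq 2^r=:R$. The difficulty is that the $X_e$ are not independent: for different $e,e'\ni v$ the variables $I(e,\cdot)$ depend on coordinates near $R_Q(e)$ and $R_Q(e')$. These coordinates are the $Y$'s and coins on $R_Q(e)$ and the $Y$'s on other edges through those vertices. By (S3) of Fact~\ref{fact:separation}, the neighbourhoods of $e\setminus\{v\}$ and $e'\setminus\{v\}$ are disjoint, and $Y_v$ is fixed. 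Hence, conditional on $Y_v$, the variables $X_e$ over $e\ni v$ are mutually independent.

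One caveat needs care. Whether $u\in e$ is accepted depends on the safety of $Y_u$ at $u$, which involves edges through $u$ other than $e$, and those edges may pass through further vertices. It is still only one layer: the safety of $(u,b)$ depends only on the $Y$'s of vertices sharing an edge with $u$. So the dependency region of $X_e$ is the $Y$'s and coins on $R_Q(e)\setminus\{v\}$ together with the $Y$'s on edges meeting those vertices, plus $Y_v$ itself, which is fixed. Two such regions for $e\neq e'$ would intersect only through a vertex lying on an edge $f$ through $w\in e$ and an edge $f'$ through $w'\in e'$, or through $f$ meeting $e'$. This is exactly what (S3) excludes, since such a configuration would close a Berge cycle of length at most $4$. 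One case needs a direct check: $f$ passing through $v$ itself. Then $f$ would share both $v$ and $w$ with $e$, contradicting (S1).

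Having independence, apply Bernstein with $M=\mu_\ell$ and $u=\omega$:
\[
\P\bigl(X_\ell>(1+\omega)\mu_\ell\mid Y_v\bigr)\leq\exp\bigl(-\omega^2\mu_\ell/2^{r+2}\bigr)\leq Q_*^{-5},
\]
using $\mu_\ell\geq K\omega^{-2}\log Q_*$ and $K\geq 5\cdot2^{r+2}$. Since $(1+\omega)\leq(1+\omega)^\ell$, averaging over $Y_v$ gives the stated bound. If $\mu_\ell$ is tiny relative to the true mean no issue arises, because $\mu_\ell$ is only an upper bound for the mean and Bernstein is stated with $\E X\leq M$.

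I expect the main obstacle to be the independence verification in the second paragraph. The acceptance events reach distance two from $v$, and one must make sure every Berge configuration creating overlap has length at most $4$. If full independence failed in some corner case, my fallback is to split $X_\ell$ into a bounded number of subsums, each consisting of independent terms, and lose only a constant factor inside $K$.
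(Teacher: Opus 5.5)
Your proposal is correct and follows the paper's proof essentially step for step: the expectation bound comes from Claim~\ref{claim:fixed-set} plus the binomial collapse $\sum_{m}\binom rm\binom m\ell B^{r-m}h^{m-\ell}=\binom r\ell(B+h)^{r-\ell}$, and the concentration comes from conditional independence of the per-edge sums (via Fact~\ref{fact:separation}) together with Lemma~\ref{lem:bernstein} with $R=2^r$ and $M=\mu_\ell$. The only difference is cosmetic: you take $u=\omega$ and use $(1+\omega)\leq(1+\omega)^\ell$, whereas the paper takes $u=(1+\omega)^\ell-1\geq\omega$; your verification of independence, including the check that no witness edge $f\neq e$ through $w\in e$ can pass through $v$, is more explicit than the paper's.
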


\begin{proof}
The event $I(e,S)=1$ implies the event estimated in Claim~\ref{claim:fixed-set}: every vertex in $R_Q(e)\setminus(S\cup\{v\})$ in particular has tentative color $c$, and every vertex of $S$ lies in $U^+$ and keeps $c$.  Hence, for every fixed $e\in E_{m,Q}(v,c)$ and every $\ell$-set $S\subseteq R_Q(e)\setminus\{v\}$,
\[
        \E[I(e,S)\mid Y_v]
        \leq p^{m-\ell}(\sigma\gamma)^\ell.
\]
Thus, using \eqref{eq:onestep-upper-bound} and $p=hd^{-1/r}$,
\begin{align*}
\E[X_\ell(v,c)\mid Y_v]
&\leq
(\sigma\gamma)^\ell
\sum_{m=\ell}^r\binom m\ell d_{m,Q}(v,c)p^{m-\ell} \\
&\leq
(\sigma\gamma)^\ell
\sum_{m=\ell}^r\binom m\ell\binom rm B^{r-m}d^{m/r}
        \left(hd^{-1/r}\right)^{m-\ell}.
\end{align*}
For each term in the last sum,
\[
        d^{m/r}\left(hd^{-1/r}\right)^{m-\ell}
        =h^{m-\ell}d^{\ell/r}.
\]
Also
\[
        \binom rm\binom m\ell=\binom r\ell\binom{r-\ell}{m-\ell}.
\]
Therefore
\begin{align*}
\E[X_\ell(v,c)\mid Y_v]
&\leq
\binom r\ell d^{\ell/r}(\sigma\gamma)^\ell
\sum_{m=\ell}^r\binom{r-\ell}{m-\ell}B^{r-m}h^{m-\ell} \\
&=
\binom r\ell d^{\ell/r}(\sigma\gamma)^\ell(B+h)^{r-\ell} \\
&=
\binom r\ell(B+h)^{r-\ell}\bigl(d(\sigma\gamma)^r\bigr)^{\ell/r}
=\mu_\ell.
\end{align*}

For concentration, write
\[
        W_e=\sum_{\substack{S\subseteq R_Q(e)\setminus\{v\}\\ |S|=\ell}}I(e,S).
\]
Then $0\leq W_e\leq2^r$.  Conditional on $Y_v$, the variables $W_e$, over all old active edges $e$ through $(v,c)$, are independent.  To see this, note that $W_e$ is determined by tentative colors in $R_Q(e)\setminus\{v\}$, by the external active edges through those vertices that can affect acceptance or safety, and by the equalizing coins at those vertices.  Distinct original edges through $v$ meet only at $v$, and after conditioning on $Y_v$ all remaining internal coordinates are disjoint.  The external coordinates are disjoint by Fact~\ref{fact:separation}.

Set $u_\ell=(1+\omega)^\ell-1$.  Since $\omega\leq1/(10r)$, we have $0<u_\ell\leq1$ and $u_\ell\geq\omega$.  Applying Lemma~\ref{lem:bernstein} conditionally on $Y_v$, with $R=2^r$, $M=\mu_\ell$, and $u=u_\ell$, gives
\[
        \P\left(X_\ell(v,c)>(1+\omega)^\ell\mu_\ell\mid Y_v\right)
        \leq
        \exp\left(-\frac{\omega^2\mu_\ell}{2^{r+2}}\right).
\]
The scale assumption \eqref{eq:onestep-scales} and the choice of $K$ make this at most $Q_*^{-5}$.  Averaging over $Y_v$ gives the unconditional bound.
\end{proof}

\subsection{Choosing a good outcome}

At this point every individual bad event has very small probability.  We now choose one random outcome avoiding all of them.

Let $\Gamma$ be the auxiliary graph on $U$ in which two vertices are adjacent if they occur together in $R_Q(e)$ for some active pair $(e,c)$.  For fixed $v$,
\begin{equation}\label{eq:aux-degree}
        \deg_\Gamma(v)
        \leq
        r s\sum_{\ell=1}^rD_{\ell,Q}
        \leq (s+d+B+3)^{K/10}=Q_*^{1/10}.
\end{equation}
Here is the calculation behind this rough bound.  If $u$ is adjacent to $v$ in $\Gamma$, then for some color $c\in L(v)$ and some active edge counted by $d_{\ell,Q}(v,c)$, the vertex $u$ is one of the at most $r$ other vertices in the residual set.  This gives the first inequality.  For the second, the degree upper-bound and the hypotheses give
\[
        \sum_{\ell=1}^rD_{\ell,Q}
        \leq
        \sum_{\ell=1}^r\binom r\ell B^{r-\ell}d^{\ell/r}
        \leq (B+d^{1/r})^r
        \leq (s+d+B+3)^r.
\]
Since $r$ is fixed and $K$ was chosen very large, the extra factor $rs$ is absorbed by $(s+d+B+3)^{K/10-r}$.

The list-size event at $v$ uses tentative-color trials based at vertices of distance at most one from $v$ in $\Gamma$, together with equalizing coins based at $v$.  The degree event for $(v,c,\ell)$ uses old active edges through $v$, witness edges through their residual vertices, and equalizing coins based at those residual vertices; hence it uses trials based at distance at most two from $v$.  Therefore two bad events sharing a trial have centers at distance at most four in $\Gamma$.

The ball of radius four in $\Gamma$ has size at most $2Q_*^{4/10}$, and at each center there are at most $1+rs\leq Q_*^{1/10}$ possible event types, once $K$ is large.  Hence the dependency degree is at most $Q_*$.  By Claim~\ref{claim:list-concentration} and Claim~\ref{claim:degree-concentration}, each bad event has probability at most $Q_*^{-5}$.  Since $eQ_*^{-5}(Q_*+1)<1$, Lemma~\ref{lem:lll} yields an outcome in which no bad event occurs.

\subsection{The new state and compatibility}
For this outcome, choose the final lists $L^+(v)\subseteq\widehat L(v)$ of size $s^+$.  By \eqref{eq:new-degree}, Claim~\ref{claim:degree-concentration}, and the definition of $d^+$,
\begin{align}\label{eq:degree-update-step}
        D_{\ell,Q^+}
        &\leq (1+\omega)^\ell\mu_\ell  \\
        &=\binom r\ell(B+h)^{r-\ell}(d^+)^{\ell/r}
        =\binom r\ell(B^+)^{r-\ell}(d^+)^{\ell/r}.
\end{align}
This proves the degree upper-bound in the conclusion of Lemma~\ref{lem:onestep}.

It remains to verify compatibility.  First $\phi^+$ is proper.  Suppose an original edge $e$ is entirely colored $c$ by $\phi^+$.  Let $R=R_Q(e)$.  If $R=\varnothing$, this contradicts the properness of $\phi$.  If $R=\{u\}$, then $u$ was accepted with color $c$, so $c\in L(u)$, contradicting the local deletion condition in $Q$.  If $|R|\geq2$, then $e$ was active for $c$ in $Q$; but every vertex of $R$ was accepted with color $c$, so no vertex in $R$ was safe, a contradiction.

Now check the local deletion condition in Definition~\ref{def:compatibility}.  Suppose that in $Q^+$ all colored vertices of an original edge $e$ have color $c$, that $R_{Q^+}(e)=\{u\}$, and that $c\in L^+(u)$.  Let $R=R_Q(e)$.  If $R=\{u\}$, then $c\notin L(u)$ by compatibility of $Q$, contradicting $L^+(u)\subseteq L(u)$.  Thus $|R|\geq2$.  Every vertex of $R\setminus\{u\}$ was accepted with color $c$, and $c\in L^+(u)\subseteq L(u)$; hence $e$ was active for $c$ in $Q$.  But all vertices of $R\setminus\{u\}$ had tentative color $c$, so $\Safe(u,c)$ failed.  This contradicts $c\in L^+(u)\subseteq\widehat L(u)$.  Therefore $Q^+$ is compatible.

\subsection{The one-step estimate for $x$}

Since $s\geq2\zeta^{-1}$ and $\gamma\geq1/2$, the floor in the definition of $s^+$ gives
\[
        s^+\geq(1-2\zeta)\gamma s.
\]
Using $d^+=((1+\omega)\sigma\gamma)^rd$, we get
\[
        x^+
        =\frac{(d^+)^{1/r}}{s^+}
        \leq
        \frac{1+\omega}{1-2\zeta}\sigma x.
\]
In the small regime,
\[
        \frac{1+\omega}{1-2\zeta}
        \leq (1+C_0(\zeta+p))(1+4\zeta)
        \leq 1+6C_0(\zeta+p).
\]
Also
\[
        \sigma x
        =\left(1-\theta+C_0(\theta\tau+p)\right)x
        =x-h+C_0h\tau+C_0px,
\]
because $\theta x=h$.  Since $\sigma\leq1$, we obtain
\begin{align*}
        x^+
        &\leq \left(1+6C_0(\zeta+p)\right)\sigma x \\
        &\leq \sigma x+6C_0(\zeta+p)x \\
        &\leq x-h+C_0h\tau+C_0px+6C_0\zeta x+6C_0px \\
        &\leq x-h+C_1(h\tau+\zeta x+px),
\end{align*}
where the last line uses $C_1=20C_0$.

For the lower bound, $s^+\leq (1-\zeta)\gamma s\leq\gamma s$, and so
\[
        x^+
        =\frac{(1+\omega)\sigma\gamma d^{1/r}}{s^+}
        \geq (1+\omega)\sigma x
        \geq \sigma x
        \geq(1-\theta)x=x-h.
\]
This completes the proof of Lemma~\ref{lem:onestep}.

\section{Iteration}\label{sec:iteration}

We now choose the parameters and iterate Lemma~\ref{lem:onestep}.  The purpose of this section is to show that the error terms in \eqref{eq:onestep-x} remain summable and that the hypotheses of the one-step lemma continue to hold until the terminal regime.

\subsection{Initial parameters and large-\texorpdfstring{$\Delta$}{Delta} requirements}

The constants $C_0,C_1,K,\eta_0,\delta,\rho,\eta$ have been fixed in \Cref{sec:constants}.
For large $\Delta$, define
\begin{equation}\label{eq:initial}
    A_\Delta=\left(\frac{r\Delta}{\log\Delta}\right)^{1/r},
    \qquad
    s_0=\left\lfloor (1+3\eps/4)A_\Delta\right\rfloor,
    \qquad
    d_0=\Delta,
    \qquad
    B_0=0,
    \qquad
    x_0=\frac{\Delta^{1/r}}{s_0}.
\end{equation}
We shall first prove colorability from lists of size $s_0$.  At the end, arbitrary lists of size $\lfloor(1+\eps)A_\Delta\rfloor$ will be reduced to arbitrary $s_0$-subsets.

Let
\begin{equation}\label{eq:h-zeta}
        \zeta=(\log\Delta)^{-4},
        \qquad
        h=\rho\eta x_0^{-(r-1)},
        \qquad
        x_* = \eta x_0^{-(r-1)}.
\end{equation}
Define
\begin{equation}\label{eq:R-beta-alpha}
    R=\left(\frac{1+10\delta}{1+\eps/2}\right)^r,
    \qquad
    \beta=1-R>0,
    \qquad
    \alpha=\frac{\beta}{4r},
\end{equation}
and put
\begin{equation}\label{eq:S-delta}
        P_\Delta=\rho\Delta^{-\alpha},
        \qquad
        S_\Delta=2K^2(\log\Delta)^9.
\end{equation}
We choose $\Delta_0(r,\eps)$ so large that, for every $\Delta\geq\Delta_0$, the following finite list of inequalities holds:
\begin{align}
&(1+\eps/2)A_\Delta\leq s_0\leq (1+\eps)A_\Delta,
        \qquad x_0\geq10,
        \qquad h\leq x_0/4,
        \label{req:basic}\\
&0<\zeta\leq \min\left\{\frac14,\frac1{200C_0r}\right\},
        \qquad
        \log(2^{r+3}\Delta)\leq2\log\Delta,
        \label{req:zeta-log}\\
&s_0\geq\Delta^\alpha,
        \qquad
        \Delta^\alpha\geq \max\{100C_0,2\zeta^{-1},S_\Delta,3\},
        \label{req:zeta-list}\\
&P_\Delta
        \leq \min\left\{\delta,\frac{\zeta}{100C_0},\frac1{10r}\right\},
        \label{req:p}\\
&C_1\left(2^r\rho\eta+\frac{4\zeta x_0}{h}+2\Delta^{-\alpha}\right)\leq \delta,
        \label{req:step-error}\\
&\log s_0\geq \left(\frac1r-\frac{\beta}{8r}\right)\log\Delta,
        \qquad
        \frac{8\zeta x_0}{h}\leq \frac{\beta}{8r}\log\Delta,
        \label{req:list-product}
\end{align}
and, for each $1\leq\ell\leq r$,
\begin{align}
    2^{-2r}\binom r\ell h^{r-\ell}x_0^\ell\Delta^{\alpha\ell}&\geq S_\Delta,
        \label{req:scale-first}\\
    2^{-2r}\binom r\ell \eta^\ell x_0^{r(1-\ell)}\Delta^{\alpha\ell}&\geq S_\Delta.
        \label{req:scale-second}
\end{align}
Such a threshold exists because $\alpha>0$, $x_0=\Theta((\log\Delta)^{1/r})$, and $h=\Theta((\log\Delta)^{-(r-1)/r})$.

\subsection{The inductive setup}

Let $Q_i=(U_i,L_i,\phi_i)$ be the state after $i$ successful steps.  The deterministic parameters are $s_i,d_i,B_i,x_i$, where
\[
        x_i=\frac{d_i^{1/r}}{s_i},
        \qquad
        B_i=ih.
\]
We continue applying Lemma~\ref{lem:onestep} while
\begin{equation}\label{eq:continue-condition}
        x_i>x_*.
\end{equation}
The induction maintains
\begin{equation}\label{eq:main-invariants}
        B_i\leq(1+\delta)x_0,
        \qquad
        x_0-B_i\leq x_i\leq x_0-B_i+\delta x_0,
        \qquad
        s_i\geq\Delta^\alpha,
\end{equation}
plus the degree upper-bound
\begin{equation}\label{eq:degree-upper-bound}
        D_{\ell,Q_i}\leq \binom r\ell B_i^{r-\ell}d_i^{\ell/r}
        \qquad(1\leq\ell\leq r).
\end{equation}
At time $0$, this holds because $D_{r,Q_0}\leq\Delta=d_0$ and $D_{\ell,Q_0}=0$ for $\ell<r$.

\subsection{Applicability of the one-step lemma}

Assume \eqref{eq:main-invariants} and \eqref{eq:degree-upper-bound} hold at time $i$, and suppose $x_i>x_*$.  Then $B_i<2x_0$ and $x_i<2x_0$.  The one-step activation parameter is
\[
        \theta_i=\frac h{x_i}\leq\frac h{x_*}=\rho.
\]
Moreover,
\[
        hB_i^{r-1}\leq h(2x_0)^{r-1}\leq\rho,
        \qquad
        h^r\leq\rho.
\]
By \eqref{eq:tau-upper-bound},
\begin{equation}\label{eq:tau-small}
        \tau_i\leq (B_i+h)^r-B_i^r\leq2^r\rho\leq\frac1{100C_0}.
\end{equation}
Also
\begin{equation}\label{eq:p-small}
        p_i=\frac{\theta_i}{s_i}\leq \rho\Delta^{-\alpha}=P_\Delta.
\end{equation}
Thus the smallness assumptions of Lemma~\ref{lem:onestep} follow from \eqref{req:zeta-list} and \eqref{req:p}.

It remains to verify the scale assumptions.  Since lists only decrease, $s_i\leq s_0$, and since $x_i<2x_0$,
\[
        d_i=(x_is_i)^r\leq(2x_0s_0)^r=2^r\Delta.
\]
Therefore
\[
        s_i+d_i+B_i+3\leq2^{r+3}\Delta
\]
for all sufficiently large $\Delta$.  Since
\[
        Q_i=(s_i+d_i+B_i+3)^K,
\]
we have
\[
        \log Q_i\leq K\log(2^{r+3}\Delta)\leq2K\log\Delta
\]
by \eqref{req:zeta-log}.  Since $\zeta^{-2}=(\log\Delta)^8$, it follows that
\begin{equation}\label{eq:Q-scale-bound}
        K\zeta^{-2}\log Q_i\leq 2K^2(\log\Delta)^9=S_\Delta.
\end{equation}
The list scale follows from $s_i\geq\Delta^\alpha$ and the requirement $\Delta^\alpha\geq S_\Delta$ in \eqref{req:zeta-list}.

For the degree scale, write $\mu_{\ell,i}$ for \eqref{eq:mu-onestep} at time $i$.  The strengthened estimate following \eqref{eq:small-regime} gives $\sigma_i\gamma_i\geq1/2$.  If $B_i\leq x_0/2$, then $x_i\geq x_0/2$.  Since $B_i+h\geq h$ and $d_i^{1/r}=x_is_i$, we get
\begin{align*}
        \mu_{\ell,i}
        &=\binom r\ell(B_i+h)^{r-\ell}(\sigma_i\gamma_i)^\ell d_i^{\ell/r} \\
        &\geq \binom r\ell h^{r-\ell}\left(\frac12\right)^\ell
              \left(\frac{x_0s_i}{2}\right)^\ell \\
        &\geq 2^{-2r}\binom r\ell h^{r-\ell}x_0^\ell s_i^\ell
        \geq S_\Delta
        \qquad(1\leq\ell\leq r)
\end{align*}
by \eqref{req:scale-first}.  If $B_i>x_0/2$, then $B_i+h>x_0/2$ and $x_i>x_*=\eta x_0^{-(r-1)}$.  Hence
\begin{align*}
        \mu_{\ell,i}
        &=\binom r\ell(B_i+h)^{r-\ell}(\sigma_i\gamma_i)^\ell(x_is_i)^\ell \\
        &\geq \binom r\ell\left(\frac{x_0}{2}\right)^{r-\ell}
              \left(\frac12\right)^\ell
              \left(\eta x_0^{-(r-1)}s_i\right)^\ell \\
        &\geq 2^{-2r}\binom r\ell \eta^\ell x_0^{r(1-\ell)}s_i^\ell
        \geq S_\Delta
        \qquad(1\leq\ell\leq r)
\end{align*}
by \eqref{req:scale-second}.  Finally, since $\omega_i=C_0(\zeta+p_i)\geq C_0\zeta$, the definition of $S_\Delta$ gives
\[
        S_\Delta\geq K\omega_i^{-2}\log Q_i.
\]
Hence every hypothesis of Lemma~\ref{lem:onestep} is satisfied whenever the process has not stopped.

\subsection{Control of $x_i$ and the list sizes}

The lower half of \eqref{eq:onestep-x} gives
\begin{equation}\label{eq:x-lower-summed}
        x_i\geq x_0-ih=x_0-B_i.
\end{equation}
For the upper bound, summing \eqref{eq:onestep-x} gives
\begin{equation}\label{eq:x-upper-sum}
        x_i
        \leq
        x_0-B_i+C_1\left(
        h\sum_{j<i}\tau_j+\zeta\sum_{j<i}x_j+\sum_{j<i}p_jx_j
        \right).
\end{equation}
Since $B_{j+1}=B_j+h$, \eqref{eq:tau-upper-bound} implies the telescoping bound
\begin{equation}\label{eq:tau-telescope}
        \sum_{j<i}\tau_j
        \leq
        \sum_{j<i}\bigl((B_j+h)^r-B_j^r\bigr)=B_i^r.
\end{equation}
Before time $i$, the induction gives $B_i<2x_0$, $x_j<2x_0$, and $i=B_i/h\leq2x_0/h$.  Hence
\[
        h\sum_{j<i}\tau_j\leq2^r\rho\eta x_0,
        \qquad
        \zeta\sum_{j<i}x_j\leq\frac{4\zeta x_0^2}{h},
        \qquad
        \sum_{j<i}p_jx_j=\sum_{j<i}\frac h{s_j}\leq2x_0\Delta^{-\alpha}.
\]
By \eqref{req:step-error}, the total error in \eqref{eq:x-upper-sum} is at most $\delta x_0$.  Thus
\[
        x_i\leq x_0-B_i+\delta x_0.
\]
Since $x_i>0$, this also implies $B_i\leq(1+\delta)x_0$.

It remains to check that the lists stay large.  The update and the floor estimate give
\[
        s_{j+1}\geq(1-2\zeta)\gamma_js_j.
\]
As $i\leq2x_0/h$ and $\log(1-2\zeta)\geq-4\zeta$,
\begin{equation}\label{eq:trim-product}
        \prod_{j<i}(1-2\zeta)
        \geq \exp(-8\zeta x_0/h).
\end{equation}
Using $p_j\leq\delta$ and \eqref{eq:tau-telescope},
\begin{equation}\label{eq:gamma-product}
        \prod_{j<i}\gamma_j
        \geq
        \exp\left(-(1+4\delta)B_i^r\right)
        \geq
        \exp\left(-(1+10\delta)^r x_0^r\right).
\end{equation}
Since $s_0\geq(1+\eps/2)A_\Delta$,
\[
        x_0^r=\frac{\Delta}{s_0^r}
        \leq
        \frac{\log\Delta}{r(1+\eps/2)^r}.
\]
Combining \eqref{eq:R-beta-alpha}, \eqref{req:list-product}, \eqref{eq:trim-product}, and \eqref{eq:gamma-product},
\[
        \log s_i
        \geq
        \log s_0-\frac{8\zeta x_0}{h}-\frac Rr\log\Delta
        \geq
        \frac{3\beta}{4r}\log\Delta
        \geq\alpha\log\Delta.
\]
Thus $s_i\geq\Delta^\alpha$, completing the induction.

The process must stop, since otherwise $B_i=ih$ would eventually exceed the uniform bound in \eqref{eq:main-invariants}.  Let $T$ be the first time with
\[
        x_T\leq x_*.
\]
Then
\begin{equation}\label{eq:terminal-x-B}
        x_T\leq\eta x_0^{-(r-1)},
        \qquad
        B_T\leq(1+\delta)x_0<2x_0.
\end{equation}

\subsection{The terminal degree bound}\label{subsec:terminal-degree}

At the terminal time, \eqref{eq:degree-upper-bound} and \eqref{eq:terminal-x-B} imply, for every $1\leq\ell\leq r$,
\begin{equation}\label{eq:terminal-degree-small}
\begin{aligned}
        \frac{D_{\ell,Q_T}}{s_T^\ell}
        &\leq
        \binom r\ell B_T^{r-\ell}x_T^\ell  \\
        &\leq
        \binom r\ell2^{r-\ell}\eta^\ell x_0^{r(1-\ell)}
        \leq4^r\eta
        \leq\eta_0.
\end{aligned}
\end{equation}
For $\ell=1$ this uses $r2^{r-1}\leq4^r$, while for $\ell\geq2$ it uses $x_0\geq1$ and $\eta^\ell\leq\eta$.

\section{Finishing the terminal instance}\label{sec:terminal}

The terminal instance is sparse enough to color by a direct counting argument.  This avoids another application of the local lemma and makes the final step independent of the nibble machinery. The argument follows the counting viewpoint introduced by Rosenfeld \cite{RosenfeldNonrepetitive} in the context of non-repetitive coloring and later used by Martinsson \cite{MartinssonRosenfeld} for the Johansson--Molloy theorem.

\begin{lemma}\label{lem:terminal-counting}
Let $Q=(U,L,\phi)$ be a compatible state with common list size $s\geq3$.  
Suppose $D_{\ell,Q}\leq\eta_0s^\ell$ for all $1\leq\ell\leq r$.
Then there is a coloring of $U$ from the lists that avoids all active constraints.  In fact, there are at least $\bigl((1-4r\eta_0)s\bigr)^{|U|}$ such colorings.
\end{lemma}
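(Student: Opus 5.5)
We prove the statement with a Rosenfeld-style induction on the number of colored vertices, following the template of Martinsson. Fix an ordering $u_1,\dots,u_n$ of $U$. For $0\leq k\leq n$, let $\mathcal C_k$ be the set of \emph{good} partial colorings $\psi$ of $\{u_1,\dots,u_k\}$ from the lists. Here good means that no active pair $(e,c)$ with $R_Q(e)\subseteq\{u_1,\dots,u_k\}$ has $\psi\equiv c$ on $R_Q(e)$. Set $N_k=|\mathcal C_k|$ and $\lambda=(1-4r\eta_0)s$. We will show by induction that
\[
N_{k}\geq \lambda N_{k-1}\qquad(1\leq k\leq n).
\]
This gives $N_n\geq\lambda^n$. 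Since $N_n$ counts colorings of $U$ avoiding all active constraints, the lemma follows; compatibility of $Q$ then makes these extensions proper, as remarked after Definition~\ref{def:compatibility}.

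For the inductive step, each $\psi\in\mathcal C_{k-1}$ has $s$ extensions by a color $c\in L(u_k)$. An extension is bad exactly when there is an active pair $(e,c)$ with $u_k\in R_Q(e)$, $R_Q(e)\subseteq\{u_1,\dots,u_k\}$, and $\psi\equiv c$ on $R_Q(e)\setminus\{u_k\}$. Thus
\[
N_k\geq sN_{k-1}-\sum_{\ell=1}^r\sum_{c\in L(u_k)}\sum_{e\in E_{\ell,Q}(u_k,c)}\#\{\psi\in\mathcal C_{k-1}:\psi\equiv c\text{ on }R_Q(e)\setminus\{u_k\}\}.
\]

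The key estimate bounds each inner count. Fix such a pair $(e,c)$ of residual size $\ell+1$, and let $j$ be the smallest index of a vertex of $R_Q(e)\setminus\{u_k\}$. Restricting $\psi$ to $\{u_1,\dots,u_{j-1}\}$ gives an element of $\mathcal C_{j-1}$. The $\ell$ vertices of $R_Q(e)\setminus\{u_k\}$ have forced colors, and the remaining $k-1-(j-1)-\ell$ vertices have at most $s$ choices each. Hence the count is at most $N_{j-1}s^{k-j-\ell}$. We also add the stronger inductive hypothesis $N_{i}\geq\lambda N_{i-1}$ for all $i<k$, which gives $N_{j-1}\leq\lambda^{-(k-j)}N_{k-1}$. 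The count is therefore at most $N_{k-1}(s/\lambda)^{k-j}s^{-\ell}$.

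The main obstacle is that $k-j$ can be large, so $(s/\lambda)^{k-j}$ is unbounded. We would handle this by organizing the sum over $j$. For a fixed $j<k$, the pairs $(e,c)$ with $u_k\in R_Q(e)$ and $u_j$ the earliest other vertex are controlled through $u_j$: the edge $e$ passes through both $u_j$ and $u_k$, and by linearity (S1) there is at most one such edge. Summing over $j$ in this way does not converge directly, so instead we would follow Rosenfeld's trick more faithfully. Drop the ``earliest vertex'' device and bound the count by uncoloring only the vertices of $R_Q(e)\setminus\{u_k\}$: the count is at most $N_{k-1}$ divided by $s^\ell$, up to a factor $(s/\lambda)^{\ell}$. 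This uses a Rosenfeld-type ``backward'' inequality. Concretely, map $\psi$ to its restriction to $\{u_1,\dots,u_{k-1}\}\setminus R_Q(e)$, recolored arbitrarily on $R_Q(e)$, and compare with $\mathcal C_{k-1}$ through the injection that recolors the $\ell$ forced vertices. We expect this comparison step to be the delicate point, and we would first try an averaging argument over the $s^\ell$ recolorings. Granting a bound of $N_{k-1}(s/\lambda)^\ell s^{-\ell}\leq 2^{\ell}... $, more precisely $N_{k-1}\lambda^{-\ell}$ with $(s/\lambda)^\ell\leq 2$ since $4r^2\eta_0\leq 1/16$, the total subtracted is at most
\[
N_{k-1}\sum_{\ell=1}^r s\,\eta_0s^\ell\cdot 2s^{-\ell}=2r\eta_0 sN_{k-1}\leq 4r\eta_0 sN_{k-1}.
\]
This yields $N_k\geq\lambda N_{k-1}$. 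The hypothesis $\eta_0=1/(64r(r+1))$ leaves ample room for the constant losses.
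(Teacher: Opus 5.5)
\textbf{There is a genuine gap.} Your set-up, the union bound over active pairs $(e,c)$ through $u_k$, and the closing arithmetic are all fine. But the one step that carries the whole argument is only ``granted''. You need
\[
\#\bigl\{\psi\in\mathcal C_{k-1}:\ \psi\equiv c\text{ on }R_Q(e)\setminus\{u_k\}\bigr\}\leq\lambda^{-\ell}N_{k-1},
\]
and this does not follow from your induction hypothesis.

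\begin{itemize}
\item The hypothesis $N_i\geq\lambda N_{i-1}$ only controls deleting the \emph{last} vertex of a prefix. The $\ell$ forced vertices of $R_Q(e)\setminus\{u_k\}$ can sit anywhere in the ordering. This is exactly why your earliest-vertex attempt produced the unbounded factor $(s/\lambda)^{k-j}$.
\item Restricting $\psi$ to $\{u_1,\dots,u_{k-1}\}\setminus R_Q(e)$ gives a good coloring of a set that is not a prefix. Nothing in your hypothesis relates the number of such colorings to $N_{k-1}$.
\item The proposed injection/averaging over the $s^\ell$ recolorings of the forced vertices does not repair this. A recoloring of those vertices can violate other active constraints, so it need not land in $\mathcal C_{k-1}$. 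Counting how many recolorings stay good is precisely the lower bound you are missing.
\end{itemize}

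\textbf{The missing idea} is to strengthen the induction to all subsets, as the paper does. For $W\subseteq U$, let $n(W)$ be the number of good colorings of $W$. Prove by induction on $|W|$ that $n(W)\geq\lambda\, n(W\setminus\{v\})$ for \emph{every} $v\in W$, not just a designated last vertex. Then fix an active pair $(e,c)$ with $v\in R_Q(e)\subseteq W$.
\begin{itemize}
\item Restricting a bad extension to $W\setminus R_Q(e)$ is injective, because the colors on $R_Q(e)$ are forced to be $c$.
\item The restriction is good, because goodness passes to subsets. Hence the number of bad extensions for $(e,c)$ is at most $n(W\setminus R_Q(e))$.
\item Delete the $\ell$ vertices of $R_Q(e)\setminus\{v\}$ from $W\setminus\{v\}$ one at a time. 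Each intermediate set has size less than $|W|$, so the hypothesis applies and gives $n(W\setminus R_Q(e))\leq\lambda^{-\ell}n(W\setminus\{v\})$.
\end{itemize}
With this in place, your final computation $s\sum_{\ell}\eta_0 s^\ell\lambda^{-\ell}\leq 2r\eta_0 s$ goes through unchanged and finishes the proof.
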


\begin{proof}
For $W\subseteq U$, call a coloring of $W$ from the lists \emphd{good} if it avoids every active constraint $(e,c)$ with $R_Q(e)\subseteq W$.  Let $n(W)$ be the number of good colorings of $W$.  Put
\[
        \sigma_0=4r\eta_0,
        \qquad
        \beta=(1-\sigma_0)s.
\]
We prove by induction on $|W|$ that, for every nonempty $W\subseteq U$ and every $v\in W$,
\begin{equation}\label{eq:rosenfeld-induction}
        n(W)\geq \beta n(W\setminus\{v\}).
\end{equation}
Iterating \eqref{eq:rosenfeld-induction} gives $n(U)\geq\beta^{|U|}>0$.

Fix $W$ and $v\in W$.  Start with a good coloring of $W\setminus\{v\}$.  There are $s$ possible colors for $v$.  An extension can fail only if there is a color $c\in L(v)$ and an active edge $e$ such that
\[
        v\in R_Q(e)\subseteq W
        \qquad\text{and}\qquad
        \text{all vertices of }R_Q(e)\text{ receive color }c.
\]
Suppose $|R_Q(e)|=\ell+1$.  For this fixed pair $(e,c)$, the number of bad extensions is at most $n(W\setminus R_Q(e))$: the colors on $R_Q(e)$ are forced to be $c$, and the remaining vertices must form a good coloring.  To see the repeated application explicitly, remove the $\ell$ vertices of $R_Q(e)\setminus\{v\}$ one at a time.  Each removal decreases the number of good colorings by a factor of at most $\beta^{-1}$, by the induction hypothesis applied inside a smaller set.  Hence
\[
        n(W\setminus R_Q(e))
        \leq \beta^{-\ell}n(W\setminus\{v\}).
\]
For each fixed color $c$, there are at most $D_{\ell,Q}$ active edges of residual size $\ell+1$ through $v$.  Summing over colors and layers, the total number of bad extensions is at most
\[
        n(W\setminus\{v\})\cdot s\sum_{\ell=1}^rD_{\ell,Q}\beta^{-\ell}.
\]
Using $D_{\ell,Q}\leq\eta_0s^\ell$ and $\beta=(1-\sigma_0)s$,
\[
        s\sum_{\ell=1}^rD_{\ell,Q}\beta^{-\ell}
        \leq
        s\eta_0\sum_{\ell=1}^r(1-\sigma_0)^{-\ell}.
\]
Since $\sigma_0=4r\eta_0=1/(16(r+1))$, we have $2\sigma_0 r<1/8$.  Using $(1-z)^{-1}\leq e^{2z}$ for $0\leq z\leq1/2$,
\[
        (1-\sigma_0)^{-\ell}\leq e^{2\sigma_0\ell}\leq e^{2\sigma_0 r}<2
        \qquad(1\leq\ell\leq r).
\]
Hence
\[
        s\sum_{\ell=1}^rD_{\ell,Q}\beta^{-\ell}
        \leq 2r\eta_0s=\frac{\sigma_0s}{2}.
\]
Therefore
\[
        n(W)
        \geq
        \left(s-\frac{\sigma_0s}{2}\right)n(W\setminus\{v\})
        \geq
        (1-\sigma_0)sn(W\setminus\{v\})
        =\beta n(W\setminus\{v\}),
\]
which proves \eqref{eq:rosenfeld-induction}.
\end{proof}

We now complete the proof of Theorem~\ref{thm:main}.  Apply Lemma~\ref{lem:terminal-counting} to the terminal state $Q_T$.  The list-size condition holds because $s_T\geq\Delta^\alpha\geq3$, and the degree condition is exactly \eqref{eq:terminal-degree-small}.  Thus the terminal vertices can be colored while avoiding all active constraints.  Compatibility at every nibble step then implies that this terminal coloring, together with all permanently assigned colors, is a proper coloring of the original hypergraph.

Finally let
\[
        k=\left\lfloor(1+\eps)A_\Delta\right\rfloor.
\]
For large $\Delta$, \eqref{req:basic} gives $s_0\leq k$.  Given arbitrary initial lists of size at least $k$, reduce each list to an arbitrary $s_0$-subset and run the argument above.  Hence
\[
        \chi_\ell(G)\leq k\leq(1+\eps)A_\Delta,
\]
which proves Theorem~\ref{thm:main}.

\printbibliography

@article {Ajtai1980Ramsy,
    AUTHOR = {Ajtai, M. and Koml\'os, J. and Szemer\'edi, E.},
     TITLE = {A note on {R}amsey numbers},
   JOURNAL = {J. Combin. Theory Ser. A},
  FJOURNAL = {Journal of Combinatorial Theory. Series A},
    VOLUME = {29},
      YEAR = {1980},
    NUMBER = {3},
     PAGES = {354--360},
   MRCLASS = {05C55 (05C35)},
  MRNUMBER = {600598},
MRREVIEWER = {J.\ E.\ Graver},
       %URL = {https://doi.org/10.1016/0097-3165(80)90030-8},
}

@article {Spencer1982hyper,
    AUTHOR = {Ajtai, M. and Koml\'os, J. and Pintz, J. and Spencer, J. and
              Szemer\'edi, E.},
     TITLE = {Extremal uncrowded hypergraphs},
   JOURNAL = {J. Combin. Theory Ser. A},
  FJOURNAL = {Journal of Combinatorial Theory. Series A},
    VOLUME = {32},
      YEAR = {1982},
    NUMBER = {3},
     PAGES = {321--335},
   MRCLASS = {05C65},
  MRNUMBER = {657047},
MRREVIEWER = {F.\ Sterboul},
}

@incollection {Fotis2021hyper,
    AUTHOR = {Iliopoulos, F.},
     TITLE = {Improved bounds for coloring locally sparse hypergraphs},
 BOOKTITLE = {Approximation, randomization, and combinatorial optimization.
              {A}lgorithms and techniques},
    SERIES = {LIPIcs. Leibniz Int. Proc. Inform.},
    VOLUME = {207},
     PAGES = {Art. No. 39, 16},
 PUBLISHER = {Schloss Dagstuhl. Leibniz-Zent. Inform., Wadern},
      YEAR = {2021},
   MRCLASS = {68R10 (68Q87)},
  MRNUMBER = {4366594},
}

@unpublished{YuZhang2026Transfer, 
	author = {J. Yu and J. Zhang},
	title = {Hypergraph independence bounds: from maximum degree to average degree},
	howpublished = {\url{https://arxiv.org/abs/2604.28046} (preprint)},
	date = {2026},
}

@unpublished{DhawanMethukuVo,
	author = {A. Dhawan and A. Methuku and M.-Q. Vo},
	title = {The independence number of uncrowded hypergraphs: bounds matching the shattering threshold},
	howpublished = {\url{https://arxiv.org/abs/2606.18048} (preprint)},
	date = {2026},
}

@unpublished{YuZhangIndependent,
	author = {J. Yu and J. Zhang},
	title = {On independent sets in uncrowded uniform hypergraphs},
	howpublished = {\url{https://arxiv.org/abs/2606.18171} (preprint)},
	date = {2026},
}

@article {MR14,
    AUTHOR = {Molloy, M. and Reed, B.},
     TITLE = {Colouring graphs when the number of colours is almost the
              maximum degree},
   JOURNAL = {J. Combin. Theory Ser. B},
  FJOURNAL = {Journal of Combinatorial Theory. Series B},
    VOLUME = {109},
      YEAR = {2014},
     PAGES = {134--195},
   MRCLASS = {05C15},
  MRNUMBER = {3269906},
MRREVIEWER = {Daqing\ Yang},
      % URL = {https://doi.org/10.1016/j.jctb.2014.06.004},
}

@techreport{Johansson,
  author      = {Johansson, A.},
  title       = {Asymptotic choice number for triangle free graphs},
  institution = {DIMACS},
  number      = {91--95},
  year        = {1996}
}

@article {Molloy19,
    AUTHOR = {Molloy, M.},
     TITLE = {The list chromatic number of graphs with small clique number},
   JOURNAL = {J. Combin. Theory Ser. B},
  FJOURNAL = {Journal of Combinatorial Theory. Series B},
    VOLUME = {134},
      YEAR = {2019},
     PAGES = {264--284},
   MRCLASS = {05C15 (05C69)},
  MRNUMBER = {3906639},
MRREVIEWER = {Hsin-Hao\ Lai},
}

@article {BernshteynDP,
    AUTHOR = {Bernshteyn, A.},
     TITLE = {The {J}ohansson--{M}olloy theorem for {DP}-coloring},
   JOURNAL = {Random Structures Algorithms},
  FJOURNAL = {Random Structures \& Algorithms},
    VOLUME = {54},
      YEAR = {2019},
    NUMBER = {4},
     PAGES = {653--664},
   MRCLASS = {05C15 (05D40)},
  MRNUMBER = {3957361},
MRREVIEWER = {Niranjan\ Balachandran},
}

@inproceedings{NibbleSurvey,
  author    = {Kang, D. Y. and Kelly, T. and K\"uhn, D. and Methuku, A. and Osthus, D.},
  title     = {Graph and hypergraph colouring via nibble methods: A survey},
  booktitle = {Proceedings of the 8th European Congress of Mathematics},
  publisher = {EMS Press},
  year      = {2023},
  pages     = {771--823}
}

@article{FriezeMubayi,
  title={Coloring simple hypergraphs},
  author={Frieze, A. and Mubayi, D.},
  journal={J. Combin. Theory Ser. B},
  volume={103},
  number={6},
  pages={767--794},
  year={2013},
  publisher={Elsevier}
}

@unpublished{LiPostle,
	author = {Li, L. and Postle, L.},
	title = {The chromatic number of triangle-free hypergraphs},
	howpublished = {\url{https://arxiv.org/abs/2202.02839} (preprint)},
	date = {2022},
}

@inproceedings{AchlioptasMolloy,
  title={The analysis of a list-coloring algorithm on a random graph},
  author={Achlioptas, D. and Molloy, M.},
  booktitle={Proceedings 38th Annual Symposium on Foundations of Computer Science},
  pages={204--212},
  year={1997},
  organization={IEEE}
}

@article{VuRandomHypergraphs,
  title={On the choice number of random hypergraphs},
  author={Vu, V. H.},
  journal={Combin. Probab. Comput.},
  volume={9},
  number={1},
  pages={79--95},
  year={2000},
  publisher={Cambridge University Press}
}

@article {VuLocallySparse,
    AUTHOR = {Vu, V. H.},
     TITLE = {A general upper bound on the list chromatic number of locally
              sparse graphs},
   JOURNAL = {Combin. Probab. Comput.},
  FJOURNAL = {Combinatorics, Probability and Computing},
    VOLUME = {11},
      YEAR = {2002},
    NUMBER = {1},
     PAGES = {103--111},
   MRCLASS = {05C15 (05C35)},
  MRNUMBER = {1888186},
MRREVIEWER = {Andr\'as\ Gy\'arf\'as},
}

@unpublished{MartinssonRosenfeld,
  author       = {Martinsson, A.},
  title        = {A simplified proof of the Johansson--Molloy theorem using the Rosenfeld counting method},
	howpublished = {\url{https://arxiv.org/abs/2111.06214} (preprint)},
	date = {2021},
}

@inproceedings{AchlioptasCojaOghlan,
  author    = {Achlioptas, D. and Coja-Oghlan, A.},
  title     = {Algorithmic barriers from phase transitions},
  booktitle = {Proceedings of the 49th Annual IEEE Symposium on Foundations of Computer Science},
  year      = {2008},
  pages     = {793--802}
}

@article{AyreCojaOghlanGreenhill,
  author  = {Ayre, P. and Coja-Oghlan, A. and Greenhill, C.},
  title   = {Hypergraph coloring up to condensation},
  journal = {Random Structures Algorithms},
  volume  = {54},
  number  = {4},
  year    = {2019},
  pages   = {615--652}
}

@article{GabrieDaniSemerjianZdeborova,
  author  = {Gabri{\'e}, M. and Dani, V. and Semerjian, G. and Zdeborov{\'a}, L.},
  title   = {Phase transitions in the q-coloring of random hypergraphs},
  journal = {Journal of Physics A: Mathematical and Theoretical},
  volume  = {50},
  number  = {50},
  year    = {2017},
  pages   = {505002}
}

@book{AS,
	author = {N. Alon and J.H. Spencer},
	title = {The Probabilistic Method},
	date = {2016},
	edition = {4},
	publisher = {John Wiley {\&} Sons},
}

@article{SpencerLLL,
	author = {J. Spencer},
	date = {1977},
	title = {Asymptotic lower bounds for Ramsey functions},
	journaltitle = {Discrete Math.},
	volume = {20},
	pages = {69--76},
}

@unpublished{RosenfeldNonrepetitive,
  author       = {Rosenfeld, M.},
  title        = {Another approach to non-repetitive colorings of graphs of bounded degree},
	howpublished = {\url{https://arxiv.org/abs/2006.09094} (preprint)},
	date = {2020},
}

@article{EL,
	author = {P. Erd{\H{o}}s and L. Lov{\'{a}}sz},
	title = {Problems and results on $3$-chromatic hypergraphs and some related questions},
	journaltitle = {Infinite and Finite Sets, Colloq. Math. Soc. J. Bolyai},
	editor = {A. Hajnal and R. Rado  and V.T. S{\'{o}}s},
	publisher = {North Holland},
	date = {1975},
	pages = {609--627},
}

\end{document}